
\NeedsTeXFormat{LaTeX2e}

\documentclass{lms}

\usepackage{amsmath,amsfonts,amssymb,tabularx}
\usepackage{graphicx}
\usepackage{color}
\usepackage{hyperref}
\usepackage{nameref}

\newtheorem{theorem}{Theorem}[section] 
\newtheorem{lemma}[theorem]{Lemma}     

\newtheorem{proposition}[theorem]{Proposition}

\newnumbered{assertion}{Assertion}    
\newnumbered{conjecture}{Conjecture}  
\newnumbered{definition}{Definition}
\newnumbered{hypothesis}{Hypothesis}
\newnumbered{remark}{Remark}
\newnumbered{note}{Note}
\newnumbered{observation}{Observation}
\newnumbered{problem}{Problem}
\newnumbered{question}{Question}
\newnumbered{algorithm}{Algorithm}
\newnumbered{example}{Example}
\newunnumbered{notation}{Notation} 



\newcommand\R{\mathbb{R}}
\newcommand\rr{\mathbb{R}}

\newcommand\RN{\mathbb{R}^N}

\newcommand\La{(-\Delta)^{\alpha/2}}
\newcommand\ul{u_{\lambda}}

\def\ml{\mathcal{L}}
\DeclareMathOperator*{\limess}{lim\,ess}
\newcommand{\sgn}{\,{\rm sgn}}


\title[Fractional diffusion-convection equations]
 {Asymptotic behaviour of solutions to fractional diffusion-convection equations} 

\author{Liviu I. Ignat and Diana Stan}



\classno{35K65,  
26A33, 
35B40, 
35B65, 
76E06}

\extraline{\textbf{Acknowledgments.}   L.I. Ignat was  partially supported by the Project {PN-III-P4-ID-PCE-2016-0035} of the Romanian National Authority for Scientific Research CNCS-UEFISCDI  and by the MINECO project MTM2014-52347, Spain and FA9550-15-1-0027 of AFOSR.
D. Stan was partially supported by the  MEC-Juan de la Cierva postdoctoral fellowship
number FJCI-2015-25797 and by the projects PN-II-RU-TE- 2014-4-0007 of the Romanian National Authority for Scientific Research CNCS--UEFISCDI, by the ERCEA Advanced Grant 2014 669689 - HADE, by the MINECO project {MTM2014-53850-P}, by Basque Government project\\
 IT-641-13 and also by the Basque Government through the BERC 2014-2017 program and by Spanish Ministry of Economy and Competitiveness MINECO: BCAM Severo Ochoa excellence accreditation SEV-2013-0323.
}

\begin{document}
\maketitle

\begin{abstract}
We consider a convection-diffusion model with linear fractional diffusion in the sub-critical range. We prove that the large time asymptotic behavior of the solution is given by the unique entropy solution of the convective part of the   equation.  The proof is based on suitable a-priori estimates, among which proving an Oleinik type inequality plays a key role.
\end{abstract}

\section{Introduction and main results}

We consider the convection diffusion equation
\[
u_{t}(t,x) + (-\Delta)^{\alpha/2}u(t,x)+(f(u(t,x)))_x=0  \quad \text{for }t>0 \text{ and } x \in \mathbb{R},
   \tag{CD}\label{CD}
\]
where $u: (0,\infty)\times \R \to \R$, $(-\Delta)^{\alpha/2}$ is the Fractional Laplacian operator of order $\alpha \in (0,2)$ and $f(\cdot)$ is a locally Lipschitz function whose prototype is $f(s)=|s|^{q-1}s/q$ with $q>1$.
This  model has received  considerable attention since the 1990s due to the interesting phenomena that appear: there is a competition between the effects of the diffusion and convection terms. Depending on the parameters $\alpha$ and $q$, the asymptotic behaviour is given by either the solution of the  diffusion equation:
\[
u_{t}(t,x) + (-\Delta)^{\alpha/2}u(t,x)=0  \quad \text{for }t>0 \text{ and } x \in \mathbb{R},
   \tag{D}\label{D}
\]
 or the convective one
\[
u_{t}(t,x) +(f(u(t,x)))_x=0  \quad \text{for }t>0 \text{ and } x \in \mathbb{R},
   \tag{C}\label{C}
\]
  or by a self-similar solution of (CD) in a critical case. The classical case $\alpha=2$ has been analysed for all $q>1$ in the quoted papers of Escobedo, V\'azquez and Zuazua \cite{EVZArma,EVZIndiana,EZ}.

In the last twenty years there has been a great interest in models with nonlocal diffusion, specially fractional diffusion since the fractional Laplacian  $(-\Delta)^{\alpha/2}$ is the infinitesimal generator of a stable Levy process. There are many applications in physical sciences where models with anomalous diffusion are needed, see the survey \cite{WoyczynskiLevyProc2001} for a description of possible applications, and the lecture notes \cite{VazCIME} for a presentation of recent models involving nonlocal diffusion.

We are interested in the large time asymptotic behavior of solutions to the initial value problem
\begin{equation}\label{Problem1}
  \left\{ \begin{array}{ll}
  u_{t}(t,x) + (-\Delta)^{\alpha/2}u(t,x)+(f(u(t,x)))_x=0 &\text{for }t>0 \text{ and } x \in \mathbb{R}, \\[10pt]
  u(0,x)  =u_0(x) &\text{for } x \in \mathbb{R}.
    \end{array}
    \right.
    \end{equation}
The critical case $q=\alpha$ makes the difference in the asymptotic behavior since equation \eqref{CD} is invariant by scaling $u_\lambda(t,x)=\lambda u(\lambda^\alpha t,\lambda x)$, and it admits self-similar solutions. In this case the asymptotic behavior of the solutions is given by the self-similar solution with the same mass as the initial datum $u_0$ (see \cite{BilerFunaki}).
In the supercritical range $\alpha \in (1,2)$, $q> \max\{1,\alpha\}$   the asymptotic behaviour is given by the fundamental  solution of the diffusion model (D)   multiplied by the mass of the initial datum (see \cite{BilerKarchWoyczAsymp} for $\alpha\in(1,2)$).
We will provide more details in next section.

In this paper we consider the case $\alpha \in (1,2)$ and  the nonlinearity $f(u)=|u|^{q-1}u/q$ in the  subcritical range $1<q<\alpha$, which has been an open issue so far. The main result of this paper is the following theorem.

\begin{theorem}\label{ThmAsympBehav}
		For any $1<q<\alpha<2$, $f(u)=|u|^{q-1}u/q$ and $u_0\in L^1(\R)\cap L^\infty(\R)$ nonnegative there exists a  unique mild solution $u \in C([0,\infty), L^1(\R))\cap  L^\infty((0,\infty)\times\R)$ of system \eqref{Problem1}. Moreover, for any $1\leq p<\infty$, solution $u$ satisfies
		\begin{equation}
\label{limit.asymp}
  \lim _{t\rightarrow \infty} t^{\frac 1q(1-\frac 1p)}\|u(t)-U_M(t)\|_{L^p(\R)}=0,
\end{equation}
where $M$ is the mass of the initial data $u_0$ and $U_M$ is the unique entropy solution of the equation
\begin{equation}\label{limit.problem}
  \left\{ \begin{array}{ll}
  u_{t}+(f(u))_x=0 &\text{for }t>0 \text{ and } x \in \mathbb{R}, \\[10pt]
  u(0)  =M\delta_0. &
    \end{array}
    \right.
\end{equation}
\end{theorem}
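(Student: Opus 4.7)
\emph{Plan.} The plan is to adapt the four-step rescaling method of Escobedo--V\'azquez--Zuazua to the present nonlocal, subcritical setting. The relevant scaling is the convection one, $u_\lambda(t,x) = \lambda^{1/q} u(\lambda t, \lambda^{1/q} x)$, under which
\[
\partial_t u_\lambda + \lambda^{(q-\alpha)/q}(-\Delta)^{\alpha/2}u_\lambda + (f(u_\lambda))_x = 0.
\]
Since $q<\alpha$, the diffusion prefactor $\lambda^{(q-\alpha)/q}\to 0$ as $\lambda\to\infty$, which powers the whole argument. Once I know that $\{u_\lambda\}$ is relatively compact in $C([\tau,T];L^1_{loc}(\R))$ for every $\tau>0$ and that any limit is the entropy solution of $v_t+(f(v))_x=0$ with datum $M\delta_0$, uniqueness identifies it with $U_M$ and the conclusion follows from the scaling identity $\|u_\lambda(1)-U_M(1)\|_{L^p(\R)} = \lambda^{(1-1/p)/q}\|u(\lambda)-U_M(\lambda)\|_{L^p(\R)}$ evaluated at $t=\lambda$, combined with interpolation against the decay bounds below.

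\emph{Well-posedness and decay.} First I would obtain the mild solution by a standard Banach fixed point on the Duhamel formula, together with the $L^\infty$ bound $\|u(t)\|_\infty\le\|u_0\|_\infty$, $L^1$-contraction, comparison principle, positivity preservation and mass conservation. The sharp decay $\|u(t)\|_{L^p(\R)}\le C\, t^{-(1-1/p)/q}$, matching the one satisfied by $U_M$, must then be proved; the natural route is a Moser-type iteration based on the Stroock--Varopoulos inequality for the fractional Laplacian, where subcriticality is what allows one to close the scheme. These bounds deliver uniform $L^1\cap L^\infty$ control of $u_\lambda$ and the tail estimates needed to upgrade $L^1_{loc}$ convergence into global $L^p$ convergence.

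\emph{Oleinik-type inequality.} This is the heart of the argument, and in my view the main obstacle. I would aim at a one-sided estimate of the form $u(t,x)-u(t,y)\le C(x-y)\,t^{-1/q}$ for $x>y$, or an equivalent pointwise bound on $\partial_x u$ from above. For the pure conservation law with convex flux this is the classical Oleinik inequality; the novelty here is that the nonlocal term forbids a direct maximum-principle argument on the differentiated equation, since one cannot assert $(-\Delta)^{\alpha/2}w\ge 0$ at a maximum of $w=u_x$. The strategy would be a doubling-of-variables argument applied to $\Phi(t,x,y)=u(t,x)-u(t,y)-C(x-y)\,t^{-1/q}$: at a first time of failure, one combines the strict convexity of $f$ in the convection terms with the nonnegativity of the two-variable operator $(-\Delta)^{\alpha/2}_x+(-\Delta)^{\alpha/2}_y$ acting on such differences, regularising via vanishing viscosity or a truncation of the L\'evy kernel to justify the pointwise computation.

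\emph{Compactness and identification.} The Oleinik bound renders $\{u_\lambda\}$ uniformly BV in space away from $t=0$; together with time-equicontinuity in $L^1_{loc}$ read off the equation, Helly's theorem produces a subsequence converging in $C([\tau,T];L^1_{loc}(\R))$ for every $\tau>0$. In the Kruzhkov entropy inequality for $u_\lambda$, the convection terms pass to the limit by strong $L^1_{loc}$ convergence, while the nonlocal contribution vanishes in $\mathcal{D}'$: for $\varphi\in C^\infty_c$, $\lambda^{(q-\alpha)/q}\int u_\lambda\,(-\Delta)^{\alpha/2}\varphi\,dx\to 0$ thanks to the prefactor and the uniform $L^1$ bound on $u_\lambda$. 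Hence the limit $u^*$ is a Kruzhkov entropy solution of $v_t+(f(v))_x=0$; mass conservation together with a tightness argument based on the $L^\infty$ decay and the spatial BV bound yields $u^*(0^+)=M\delta_0$, and uniqueness of entropy solutions with a measure datum forces $u^*=U_M$. Therefore the whole family $\{u_\lambda(1,\cdot)\}$ converges to $U_M(1,\cdot)$ in $L^p(\R)$ and the scaling identity above gives \eqref{limit.asymp}.
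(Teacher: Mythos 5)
Your overall architecture coincides with the paper's: hyperbolic rescaling (your $u_\lambda(t,x)=\lambda^{1/q}u(\lambda t,\lambda^{1/q}x)$ is the same one-parameter family as the paper's $\lambda u(\lambda^q t,\lambda x)$), vanishing prefactor $\lambda^{(q-\alpha)/q}$ on the diffusion, compactness, identification of the limit as the entropy solution with datum $M\delta_0$, uniqueness, and the scaling identity at $t=1$. The gaps are in the two quantitative inputs that make this machine run. First, the sharp decay $\|u(t)\|_{L^p}\lesssim t^{-\frac1q(1-\frac1p)}$ cannot be obtained by Moser iteration with Stroock--Varopoulos: in any $L^p$ energy estimate the convection term $\int (f(u))_x\,|u|^{p-2}u\,dx$ vanishes identically, so that route only sees the fractional diffusion and yields the slower rate $t^{-\frac1\alpha(1-\frac1p)}$; since $q<\alpha$ this is strictly insufficient, and indeed with only the diffusive rate one gets $\|u_\lambda(t)\|_{L^p}\lesssim \lambda^{(1-\frac1p)(\frac1q-\frac1\alpha)}t^{-\frac1\alpha(1-\frac1p)}\to\infty$, so your uniform bounds on $u_\lambda$ and the interpolation in the last step collapse. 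In the paper the hyperbolic rate is not an independent energy estimate at all: it is deduced from the Oleinik inequality plus mass conservation ($\|u(t)\|_\infty\le (\tfrac{q}{q-1}M)^{1/q}t^{-1/q}$ by integrating the one-sided bound).

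Second, and more seriously, your Oleinik step is stated in a form that would not close the argument and its proposed proof does not meet the actual difficulty. The bound $u(t,x)-u(t,y)\le C(x-y)t^{-1/q}$ is not scale-invariant: if it held, the rescaled functions would only satisfy $(\partial_x u_\lambda)^+\lesssim \lambda^{1/q}t^{-1/q}$, so no uniform BV/$W^{1,1}_{loc}$ bound and no Helly/Aubin--Lions compactness. The scale-invariant estimate, which is what the paper proves (Proposition~\ref{prop.oleinik}), is $(u^{q-1})_x\le 1/t$, equivalently $u_x\lesssim M^{(2-q)/q}t^{-2/q}$; note the exponent $2/q$, not $1/q$. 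Moreover, your mechanism (convexity of $f$ plus the sign of $(-\Delta)^{\alpha/2}_x+(-\Delta)^{\alpha/2}_y$ at a maximum of the doubled variable) is essentially the maximum-principle argument on $w=u_x$, which gives at best $u_x\le \|u_0\|_\infty^{2-q}/((q-1)t)$ --- again not scale-invariant and not uniform along $u_\lambda$. To get the invariant form one must work with $z=u^{q-1}$, and then differentiating the equation produces the nonlocal expression $A(w,z)=-(2-q)w(-\Delta)^{\alpha/2}[z^{\beta+1}]+z(-\Delta)^{\alpha/2}[z^{\beta}w]$, which has no favorable sign at a maximum except in the special case $q=2$ (exactly the case where the barrier/doubling arguments in the literature apply, as the paper's remark explains). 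The paper's proof handles this by working with smooth solutions bounded below by $\epsilon>0$ (the approximations $u_\epsilon$ of Lemma~\ref{lemma_ueps_u}), evaluating along a maximizing sequence, splitting the L\'evy kernel at a radius $r_n$, and proving via Young's inequality (Lemma~\ref{positive.term}) that the surviving nonlocal contribution is $\ge W(t)I_n-o(1)$ with $I_n\ge 0$, so that $W'+W^2\le 0$ and $W\le 1/t$. Your sketch, as written, neither identifies this term nor explains how "truncation of the L\'evy kernel or vanishing viscosity" would control it, so the heart of the proof is missing.
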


\begin{remark}
	We believe that the $L^\infty$-assumption on the initial data can be dropped. Through the paper we will consider nonnegative solutions. The general case of
 changing sign solutions can be analysed following the same arguments as in \cite[Section 6]{CazacuIgnatPazoto}.
We emphasise that since the nonlinearity should be locally Lipschitz we should impose $q>1$. Since we are interested in the subcritical case where  the convection is dominant we have to impose $\alpha>q$ and hence $\alpha$ should belong to the interval $(1,2)$.
\end{remark}

An interesting phenomenon happens: the diffusion is dominant over the convection for $\alpha>1$, having a regularizing effect on the solution. However, when $1<q<\alpha$ in the asymptotic limit as time $t$ goes to infinity the solution approaches the unique entropy solution to the pure convective equation which is discontinuous and develops shocks. This phenomenon has been established for the local case $\alpha=2$ by Escobedo, V\'azquez and Zuazua in \cite{EVZArma}. In this paper we prove that this behavior  holds as long as  $1<q<\alpha<2$.  This is done using both parabolic and hyperbolic arguments and dealing with the difficulties created by the nonlocal operator and the nonlinearity of the convective term.

The organization of the paper is as follows. In Section \ref{SectPrelim} we give a panorama on previous results on the model both in local and nonlocal cases. Also we provide a reminder on the diffusion equation which will be useful throughout the paper.
 In Section \ref{SectionExist} we are concerned with the existence and main properties of solutions. Entropy and mild solutions are introduced. The key estimate is given in Proposition \ref{prop.oleinik} where we show that for any $\alpha,q\in (1,2]$ and any initial data uniformly bounded above and below by two positive constants, the solution of our problem satisfies an Oleinik type inequality, $(u^{q-1})_x\leq 1/t$. We emphasize that this estimate does not require $q<\alpha$.
In Section \ref{SectAsymp} we prove the asymptotic behavior of solutions stated in Theorem \ref{ThmAsympBehav}.

\section{Preliminaries}\label{SectPrelim}

\subsection{Panorama: from local to nonlocal diffusion}

 We describe some of the results known so far for this convection-diffusion model. We try to cover all the ranges of parameters and finally to better place our contribution in this field.

The general model is
  \begin{equation}\label{GenEq}
  \left\{ \begin{array}{ll}
 u_t(t,x) + \mathcal{L}[u](t,x)+\overline{b}\cdot \nabla (f(u(t,x)))=0 &\text{for }t>0 \text{ and } x \in \RN, \\[10pt]
  u(0,x)  =u_0(x) &\text{for } x \in \RN,
    \end{array}
    \right.
\end{equation}
  where $\mathcal{L}$ is a L\'evy type operator, $\widehat{\mathcal{L}v}(\xi)=a(\xi)\hat v(\xi)$, whose symbol $a$ is written in the form
  \[
  a(\xi)=i k \xi +\mu(\xi)+\int _{\rr^N}\big(1-e^{-i\eta \xi}-i\eta \xi \mathbf{1}_{|\eta|<1}\big)\Pi (d\eta).
\]
Usually $k\in \rr^N$, $\mu$ is a positive semi-definite quadratic form on $\rr^N$ and  $\Pi$   is a positive Radon measure satisfying
\[
  \int_{\rr^N} \min\{|z|^2, 1\} \Pi(dz)<\infty.
\]
Two particular cases are the Laplacian, $\ml=-\Delta$ and $\ml=(-\Delta)^{\alpha/2}$ corresponding to $k=0$, $\mu(\xi)=|\xi|^2$, $\Pi =0$
and $k=0$, $\mu(\xi)=0$, $\Pi(dz)=|z|^{-N-\alpha}dz$ respectively.

\medskip

\noindent\textbf{Local Diffusion.} The local diffusion case, i.e. $\ml=-\Delta$, has been intensively studied for linear diffusion $u_t - \Delta u + \overline{b}\cdot \nabla (|u|^{q-1}u)=0$, see \cite{EZ} for the supercritical and critical cases ($q\geq 1+1/N$ in $\rr^N$) and \cite{EVZArma} for the subcritical case $1<q<2$ in dimension $N=1$. The subcritical case $q<1+1/N$ in any dimension $N\geq 1$ has been analysed in \cite{EVZIndiana} for nonnegative solutions and  for changing sign solutions in \cite{Carpio1996}.

\medskip

\noindent\textbf{Nonlocal Diffusion.} There is always a competition between the diffusion, which is differentiable of order $\alpha$, and the convection terms having one derivative. This implies the consideration of certain classes of solutions: entropy solutions, weak solutions, mild solutions.  The study takes into consideration the fractional order $\alpha$, the nonlinearity $f(u)$, the dimension $N$ and the regularity of the initial data $u_0$.

\noindent\textbf{Existence of solutions.} For all ranges or parameters $\alpha \in (0,2)$, $q>1$,  the model admits a unique entropy solution. More precisely,
 for $\alpha \in (1,2)$ and $f$ locally Lipshitz,  the existence and uniqueness of entropy solutions were proved by Droniou \cite{DroniouVanishing2003}.
Then  Alibaud \cite{AlibaudEntropy} proved the same for $\alpha \in (0,2)$.
 Cifani and Jakobsen \cite{CifaniJakobsenEntropySol}  proved the existence of entropy solutions for the degenerate nonlinear nonlocal integral equation $u_t+(-\Delta)^{\alpha/2}A(u)+(f(u))_x=0$ with  $\alpha \in (0,2) $ and developed a numerical scheme that gives an idea of the asymptotic behavior of the solution.

The existence of entropy solutions for \eqref{GenEq} with merely bounded (possibly non-integrable) data has been proved by Endal and Jakobsen \cite{EndalJakobsen}.  If moreover $f\in C^\infty$, $\alpha \in (1,2)$  and $q>1$  then there exists a unique mild solution with good regularity properties, see Droniou, Gallouet, Vovelle \cite{Droniou}.

When the diffusion is smaller, $\alpha\in (0,1]$ regularity is lost, since the convection has the effect of shock formation. There is non-uniqueness of weak solutions, as proved by Alibaud and Andreianov \cite{AlibaudAndreianov}. However, uniqueness holds in the class of entropy solutions.

\smallskip

\noindent\textbf{Asymptotic Behaviour.} Concerning the asymptotic behavior of solutions there are previous works in some ranges of exponents.

\emph{(i) Integrable data.} When the data is $u_0 \in L^1(\RN)$ there are previous works in the critical and supercritical cases. The critical case corresponds to $q=1+\frac{\alpha-1}{N}$ when the equation \eqref{CD} admits a unique self-similar solution $U(t,x)=t^{-N/\alpha}U(1,xt^{-1/\alpha})$ with data $U(0,x)=M\delta(x).$
For $\alpha\in (1,2)$ the critical case has been analyzed Biler, Karch and  Woyczy{\'n}ski \cite{BilerKarchWoycz2001} who proved that
the asymptotic profile as $t\to \infty$ is given by the self-similar solution $ U(t,x)$ described above.
When $\alpha\in (0,1)$ the critical exponent $q$ is less than one and the nonlinearity would  not be Lipschitz which is out the scope of this analysis.

In the supercritical case  $q>1+(\alpha-1)/{N}$, $\alpha\in (1,2)$, the diffusion is dominant and then the asymptotic behavior of solutions to \eqref{Problem1} with $u_0\in L^1(\rr)\cap L^\infty(\rr)$ is given by $e^{-t(-\Delta)^{\alpha/2}}u_0$, the solution  of the linear diffusion problem $U_t + (-\Delta)^{\alpha/2} U=0$ with data $U(0,x)=u_0(x)$ (see Biler, Karch and  Woyczy{\'n}ski \cite[Th.~4.1, Lemma~4.1]{BilerKarchWoyczAsymp}). Some results in the one dimensional case were obtained by Biler, Funaki and Woyczy{\'n}ski \cite{BilerFunaki}. The analysis of the linear semigroup generated by \eqref{D} shows that the first term in the asymptotic behaviour may be chosen as $M K^\alpha_t$ where $K^\alpha_t$ is the fundamental solution of problem \eqref{D}. See for instance \cite[Theorem 6.3]{BSVfracHE}.
In Section \ref{reminder} we present more details about the linear model \eqref{D} and its properties.

When $\alpha\in (0,1)$ all the nonlinearities considered here are super-critical since $q>1>1+(\alpha-1)/N$. The  asymptotic behavior is given again by the linear semigroup. We state in the following theorem the result in the one-dimensional case.
\begin{theorem}
For any  $\alpha \in (0,1)$,  $q>1$, $f(u)=|u|^{q-1}u/q$ and $u_0\in L^1(\R)\cap L^\infty(\R)$  there exists a  unique entropy solution $u$ of system \eqref{Problem1}. Moreover, for any $1\leq p<\infty$, solution $u$ satisfies
		\begin{equation*}
  \lim _{t\rightarrow \infty} t^{\frac 1 \alpha(1-\frac 1p)}\|u(t)-U(t)\|_{L^p(\R)}=0,
\end{equation*}
where $U$ is the unique weak solution of the equation
\begin{equation*}
  \left\{ \begin{array}{ll}
 U_{t}(t,x)+(-\Delta)^{\alpha/2}U(t,x)=0 &\text{for }t>0 \text{ and } x \in \mathbb{R}, \\[10pt]
  U(0,x)  =u_0(x)  & \text{for } x \in \mathbb{R}.
    \end{array}
    \right.
\end{equation*}
\end{theorem}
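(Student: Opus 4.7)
The plan is to follow the scaling/compactness strategy of \cite{EZ,BilerKarchWoyczAsymp}. Introduce the rescaled solution
\[
u_\lambda(t,x) := \lambda\, u(\lambda^\alpha t,\, \lambda x),\qquad \lambda > 0,
\]
which preserves the $L^1$ mass and satisfies
\[
(u_\lambda)_t + (-\Delta)^{\alpha/2} u_\lambda + \lambda^{\alpha - q}\bigl(f(u_\lambda)\bigr)_x = 0,\qquad u_\lambda(0,x) = \lambda u_0(\lambda x).
\]
The rescaled initial data concentrate to $M\delta_0$ as $\lambda \to \infty$, and, since $q > 1 > \alpha$, the coefficient $\lambda^{\alpha - q}$ of the convection term vanishes; the formal limit equation is therefore the linear fractional heat equation, whose unique solution with datum $M\delta_0$ is $M K^\alpha_t$. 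Undoing the rescaling at $t = 1$ translates the convergence $u_\lambda(1,\cdot)\to MK^\alpha_1$ in $L^p(\R)$ into the decay $\tau^{\frac{1}{\alpha}(1-\frac{1}{p})}\|u(\tau)-MK^\alpha_\tau\|_{L^p}\to 0$, and since the linear asymptotics identify $U(\tau)$ with $MK^\alpha_\tau$ to the same order (see e.g.\ \cite[Theorem~6.3]{BSVfracHE}), the triangle inequality yields \eqref{limit.asymp}.

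Uniform-in-$\lambda$ estimates proceed through the $L^p$-smoothing
\[
\|u_\lambda(t)\|_{L^p(\R)} \leq C\, t^{-\frac{1}{\alpha}(1-\frac{1}{p})},\qquad 1 \leq p \leq \infty,
\]
which follows from Nash/Sobolev-type inequalities for $(-\Delta)^{\alpha/2}$ (the convection term integrates to zero in the $L^p$ energy identity by the chain-rule/divergence argument). Combining this with tightness at spatial infinity (obtained from a first-moment estimate in which the convection contribution is weighted by $\lambda^{\alpha-q}\to 0$) and with equicontinuity in time coming from the rescaled equation (the right-hand side is uniformly bounded in a negative Sobolev space of order one), an Aubin--Lions step yields precompactness of $\{u_\lambda\}$ in $C_{\mathrm{loc}}((0,\infty);L^1(\R))$. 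Passing to the limit in the weak formulation, the convection term vanishes because $\lambda^{\alpha-q}\to 0$ while $\|f(u_\lambda(s))\|$ stays uniformly bounded on each slab $[t_1,t_2]\subset(0,\infty)$; by the uniqueness of the linear fractional heat solution with Dirac datum, every accumulation point equals $M K^\alpha_\cdot$, and the whole family converges.

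The main obstacle is the compactness step in the range $\alpha\in(0,1)$: the fractional heat semigroup smooths only very weakly, so the naive Duhamel comparison
\[
u_\lambda(t) - e^{-t(-\Delta)^{\alpha/2}}u_\lambda(0) = -\lambda^{\alpha-q}\int_0^t \partial_x e^{-(t-s)(-\Delta)^{\alpha/2}} f(u_\lambda(s))\,ds
\]
fails because $\|\partial_x e^{-s(-\Delta)^{\alpha/2}}\|_{L^p\to L^p}\sim s^{-1/\alpha}$ is non-integrable near $s=0$. I would avoid transferring the derivative onto the kernel: using $(f(u_\lambda))_x = f'(u_\lambda)(u_\lambda)_x$ together with the uniform $L^\infty$ bound on $u_\lambda$ and a BV-smoothing estimate $\|(u_\lambda(t))_x\|_{L^1}\leq C t^{-1/\alpha}$ (obtained by approximating $u_0$ by $BV$ data and exploiting the $L^1$-contraction property of entropy solutions, since the corresponding bound holds for the linear fractional heat kernel itself) restores an integrable singularity and closes the compactness argument. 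Note that the Oleinik inequality of Proposition~\ref{prop.oleinik} is stated for $\alpha\in(1,2]$ and is not available here, which is why the BV approximation route replaces it.
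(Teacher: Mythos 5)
Your overall architecture (diffusive rescaling $u_\lambda(t,x)=\lambda u(\lambda^\alpha t,\lambda x)$, the vanishing factor $\lambda^{\alpha-q}$ on the convection, identification of the limit with $MK^\alpha_t$, and the transfer back via self-similarity plus the linear asymptotics of \cite[Theorem 6.3]{BSVfracHE}) is exactly the ``work directly with entropy solutions under the diffusive scaling'' route that the paper only sketches, the paper's actual suggestion being the vanishing-viscosity scheme $(u_\epsilon)_t+(-\Delta)^{\alpha/2}u_\epsilon+(f(u_\epsilon))_x=\epsilon\Delta u_\epsilon$ of Alibaud--Imbert--Karch, where all estimates are justified on smooth approximations and one lets $\epsilon\to0$ at the end. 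The genuine gap in your proposal is the compactness step: the claimed uniform bound $\|(u_\lambda(t))_x\|_{L^1(\R)}\le Ct^{-1/\alpha}$ is not delivered by the argument you give. $L^1$-contraction plus translation invariance only yields $TV(u(t))\le TV(u_0)$ for $BV$ data --- no time decay, and nothing at all for $u_0\in L^1\cap L^\infty$ outside $BV$, since approximation by $BV$ data produces bounds that blow up with the approximation. The smoothing $\|\partial_x(K^\alpha_t\ast u_0)\|_{L^1}\lesssim t^{-1/\alpha}\|u_0\|_{L^1}$ is a property of the linear semigroup and does not transfer to the nonlinear entropy semigroup through contraction: contraction transports moduli of continuity of the data, it does not create them. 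Worse, for $\alpha\in(0,1)$ the equation genuinely fails to regularize (shocks occur; see the discussion and references in Section 2 of the paper), so a $W^{1,1}$ bound is false in general, and even read as a total-variation bound the decay $t^{-1/\alpha}$ on the relevant rescaled time slabs is a strong large-time regularity statement of essentially the same difficulty as the asymptotics you are proving; it cannot simply be invoked. The good news is that the detour is unnecessary: under this scaling the diffusion does \emph{not} degenerate (its coefficient is $1$, in contrast with the factor $\lambda^{q-\alpha}$ in the paper's subcritical case), so the energy inequality gives $u_\lambda$ bounded in $L^2((t_1,t_2);H^{\alpha/2}(\R))$ uniformly in $\lambda$ (using the $L^2$-decay at $t_1$), and together with your $\partial_t u_\lambda$ bound in a negative Sobolev space, Aubin--Lions already yields the needed local compactness --- provided the energy and $L^p$-decay inequalities are justified at the level of the viscous or splitting approximation rather than directly on the nonsmooth entropy solution.

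A second, more minor, flaw: tightness via a first-moment estimate is not available here, because for $\alpha\in(0,1]$ the kernel $K^\alpha_t$ decays like $|x|^{-1-\alpha}$ and first moments are infinite even for the linear flow with compactly supported data. The standard repair is the cutoff argument of Step II of Section 4 of the paper: test with $\varphi(\cdot/R)$ and use the homogeneity bound $\|(-\Delta)^{\alpha/2}\varphi(\cdot/R)\|_{L^\infty}\lesssim R^{-\alpha}$ to get a uniform tail control, which upgrades $L^1_{loc}$ to $L^1(\R)$ convergence; your final interpolation step to general $p$ is then fine.
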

\begin{proof}
The proof should follow as in \cite[Th.~1.1, Th.~3.5]{AlibaudImbertKarch} by using the technique of approximation with a vanishing viscosity term:
$$
(u_ \epsilon)_t +(-\Delta)^{\alpha/2} u_\epsilon+(f(u_\epsilon))_x=  \epsilon \Delta u_\epsilon.
$$
The asymptotic behavior is proved first for this approximating problem and then by letting $\epsilon\rightarrow 0$ for the initial problem.
We could also  work directly with entropy solutions as in this present paper, but one should  consider a parabolic scaling 
$u_\lambda(t,x)=\lambda^2 u(\lambda^2 t,\lambda x)$ instead of the one used in Section \ref{SectAsymp}.
A detailed proof of these fact does not bring great novelty and we consider it is beyond the purpose of this paper.
\end{proof}

 In this work we make a step further by describing the asymptotic behavior of mild solutions in the subcritical case $1<q<1+(\alpha-1)/{N}$ and dimension one, that is $1<q<\alpha<2$, for bounded integrable data.

    \medskip
\emph{(ii) Step-like data.} There is an interesting phenomenon when $f(u)=u^2/2$ supplemented by a step-like initial datum  approaching the constants $u_{\pm}$, $u_-<u_+$, as $x\rightarrow \pm \infty$, respectively. For $\alpha\in (1,2)$ in \cite{KarchMiaoXu} the authors  study the one dimensional case and they prove that the limit profile is given by a rarefaction wave, that is the unique entropy solution of the Riemann problem
 \[
  w_t+ww_x=0, \quad
  w(0,x)=
  \left\{
  \begin{array}{ll}
   u_-, &x<0, \\
  u_+,&x>0.
    \end{array}
  \right.
\]
When $\alpha \in (0,1)$ the convection is negligible and the asymptotic behavior is given by the solution of the diffusion problem \eqref{D} with the same initial initial data $w(0,x)$ as above.
This is proved in \cite{AlibaudImbertKarch}  in dimension one.
The two-dimensional case of the above results has been analysed by Karch, Pudelko and Xu \cite{KarchPudelkoXu}. The characterization depends on the fractional order $\alpha$ and on the direction $\overline{b}$ of the convective nonlinearity in \eqref{GenEq}.

\medskip

\noindent\textbf{Remarks. } (i) There is a connection with Hamilton-Jacobi equations. By considering the integrated solution $v(t,x)=\int_{-\infty}^x u(t,y)dy$, it follows that $v(t,x)$ solves the equation $v_t + \La[v] + \frac{1}{q}(v_x)^q=0$, which is a type of Hamilton-Jacobi equation with fractional diffusion. The problem admits classical solutions when $\alpha \in (1,2)$ (\cite{DroniouImbertFractal,Imbert2005218}). For $\alpha=1$ this is related to drift-diffusion equations (\cite{Silvestre20112020}).

 (ii) There is a considerable interest in nonlocal equations with zero-order operators $\mathcal{L}[u]= J \star u - u$, where $J$ is a non-singular, integrable kernel with mass one. This is a quite different topic, since the nonlocal operator does not provide any regularity for the solution, as it happens in the fractional derivative case, and then  other techniques must be used. When $q<2$, the first author considers the model $u_t = J \star u - u - (f (u))_x$ in \cite{CazacuIgnatPazoto}. The asymptotic behavior is given by the solution of \eqref{limit.problem}. The case $q=2$ has been analyzed in \cite{MR2138795} and $q>2$ in \cite{MR3190994}. There are situations when the convection is also nonlocal, $u_t = J \star u - u + G\ast f(u)-f(u)$. We refer to \cite{MR2356418} for the supercritical case $q>1+1/N$ and \cite{MR3328145} for the critical case $q=1+1/N$. However, for the subcritical case, i.e. $q<1+1/N$ there are no results on the long time behavior of the solutions.

(iii) The case of nonlinear local diffusion also brings considerable difficulties, for instance for porous-medium type diffusion and convection the model becomes $u_t=\Delta u^m- (u^q)_x$.  The third parameter $m$ of the nonlinearity changes the behaviour of the solution.  For slow diffusion and slow convection we refer to Lauren\c{c}ot and Simondon \cite{LaurencotSimondon}. See \cite{LaurencotFast} for fast convection $0<q<1$ and slow diffusion $m>1$. The asymptotics of both fractional and nonlinear diffusion, $(-\Delta)^{\alpha/2}(u^m)$ plus convection has not been considered as far as we know.

\subsection{Reminder on linear fractional diffusion}\label{reminder}
We recall some useful results concerning  the associated diffusion problem \eqref{D}, that is the \emph{Fractional Heat Equation}  for $0<\alpha<2$. We consider the initial value problem
\begin{equation}\label{FHE}
  \left\{ \begin{array}{ll}
U_t (t,x) + (-\Delta)^{\alpha/2} U(t,x)=0 \quad \text{for } x \in \mathbb{R} \text{ and }t>0,\\
   U(0,x)=U_0(x) \quad \text{for } x \in \mathbb{R}.
    \end{array}
    \right.
\end{equation}
    This problem has been widely studied and many results are known  (see \cite{Applebaum,Bertoin,BlumenthalGetoor} for the probabilistic point of view,  \cite{Valdinoc} for a nice motivation of the model and the recent survey \cite{BSVfracHE} for a complete characterization). Some useful properties  are proved in \cite[Section 2]{Droniou}.
    For initial  data $U_0 \in L^1(\R)$ the solution of Problem  \eqref{FHE} has the integral representation
\begin{equation*}
U(t,x)=(K^{\alpha}_t(\cdot) \star U_0 ) (x)= \int_{\R}K_t^{\alpha}(x-z)U_0(z)dz\,,
\end{equation*}
where the kernel $K_t^\alpha$ has Fourier transform $\widehat{K}_t^{\alpha}(\xi)=e^{-|\xi|^{\alpha}t}.$
If $\alpha=2$, the function $K^2_t$ is the Gaussian heat kernel. We recall some detailed information on the behaviour of the kernel $K_t^{\alpha}(x)$ for $0<\alpha<2$. In the particular case $\alpha=1$, the kernel is explicit, given by the
 formula
 $$
 K^{1}_t(x)=C t (|x|^2+t^2)^{-1}.
 $$
Kernel $K_t^{\alpha}(x)$ is the fundamental solution of Problem \eqref{FHE}, that is $K_t^{\alpha}(x)$ solves the problem with initial data Dirac delta
 $
\delta_0.
 $
 It is known \cite{BlumenthalGetoor} that the kernel $K_t^{\alpha}$ has the self-similar form
 $$K_t^{\alpha}(x)=t^{-1/\alpha}F_\alpha(|x|t^{-1/\alpha}),$$
 for some profile function, $F_\alpha(r)$. For any $\alpha\in (0,2)$ the profile $F_\alpha$ is $C^\infty(\rr)$, positive and decreasing on $(0,\infty)$,  and behaves at infinity like $F_\alpha(r) \sim r^{-(1+\alpha)}$. Moreover, the solution of Problem \eqref{FHE} behaves as time $t\to \infty$ as  $M K_t^{\alpha}$, where $M=\int_{\R}U_0(x) dx$ is the total mass:
\begin{equation*} 
t^{  \frac{1}{\alpha}\left( 1- \frac{1}{ p} \right)} \|U(t,\cdot) -M K_t^{\alpha}(\cdot)\|_{L^{p}(\rr)} \to 0   \quad\text{as}\quad  t\to \infty.
\end{equation*}
See for instance \cite[Theorem 6.3]{BSVfracHE}. Throughout the paper we will need the following time decay estimates on the fractional derivatives of the kernel.

 \begin{lemma}
 	\label{decay.nucleu}
 	For any $\alpha\in (0,2)$, $s\geq 0$ and $1\leq p\leq \infty$ the kernel $K_t^\alpha$ satisfies the following estimates for any positive $t$:
\begin{eqnarray}
\|   K_t^{\alpha} \|_{L^{p}(\R)} &\simeq & \mathcal{K}    t^{-\frac{1}{\alpha}(1-\frac 1p)},  \label{EstFHE}\\
\||D|^s K^\alpha_t\|_{L^p(\R)}& \lesssim & t^{-\frac {1}\alpha (1-\frac 1p)-\frac s\alpha},\label{EstFHE2}\\
\||D|^s \partial_x K^\alpha_t\|_{L^p(\R)}& \lesssim & t^{-\frac {1}\alpha (1-\frac 1p)-\frac {s+1}\alpha}\label{EstFHE3}.
\end{eqnarray}
\end{lemma}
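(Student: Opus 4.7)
My plan is to establish all three estimates uniformly through a scaling argument that reduces each one to showing that a fixed profile lies in $L^p(\R)$ for every $p \in [1,\infty]$.

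First, starting from $\widehat{K_t^\alpha}(\xi)=e^{-t|\xi|^\alpha}$ and applying the substitution $\xi=t^{-1/\alpha}\eta$ inside the inverse Fourier transform, I would verify the self-similar identities
$$|D|^s K_t^\alpha(x)=t^{-(s+1)/\alpha}\,G_s(x t^{-1/\alpha}),\qquad |D|^s\partial_x K_t^\alpha(x)=t^{-(s+2)/\alpha}\,H_s(x t^{-1/\alpha}),$$
with $G_s=\mathcal{F}^{-1}(|\eta|^s e^{-|\eta|^\alpha})$ and $H_s=\mathcal{F}^{-1}(i\eta|\eta|^s e^{-|\eta|^\alpha})$. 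Taking $L^p(\R)$ norms and performing the substitution $y=xt^{-1/\alpha}$ directly produces the exponents $-\frac{1}{\alpha}(1-\frac{1}{p})-\frac{s}{\alpha}$ and $-\frac{1}{\alpha}(1-\frac{1}{p})-\frac{s+1}{\alpha}$ claimed in \eqref{EstFHE2}--\eqref{EstFHE3}. The first bound \eqref{EstFHE} is the special case $s=0$ with $G_0=F_\alpha$, and the $\simeq$ relation follows because $F_\alpha$ is a strictly positive function, so $\|F_\alpha\|_{L^p}$ is a positive finite constant.

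Second, it remains to show $G_s,H_s\in L^p(\R)$ for every $p\in[1,\infty]$. The $L^\infty$ bound follows from Hausdorff--Young: both $|\eta|^s e^{-|\eta|^\alpha}$ and $|\eta|^{s+1}e^{-|\eta|^\alpha}$ are integrable on $\R$, since $e^{-|\eta|^\alpha}$ provides super-polynomial decay at infinity while the polynomial weight is locally integrable near the origin. For the $L^1$ bound I would decompose the Fourier symbol using a smooth cutoff $\chi$ concentrated near the origin. The high-frequency piece $(1-\chi)(\eta)|\eta|^s e^{-|\eta|^\alpha}$ is a Schwartz function (smooth away from $0$ with stretched-exponential decay), so its inverse Fourier transform lies in $\mathcal{S}(\R)\subset L^1(\R)$. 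The low-frequency piece is essentially a smooth truncation of the homogeneous symbol $|\eta|^s$, whose inverse Fourier transform decays at spatial infinity like $|x|^{-1-s}$ for the $G_s$ symbol and like $|x|^{-2-s}$ for the $H_s$ symbol; both are integrable at infinity and bounded near zero. Finally, interpolation between $L^1$ and $L^\infty$ yields $L^p$ membership for every $p\in[1,\infty]$.

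The one technically delicate step is the low-frequency analysis, where one extracts the correct power decay at spatial infinity from the non-smoothness of the Fourier symbol at the origin, via the classical Fourier correspondence $|\eta|^s\leftrightarrow c\,|x|^{-1-s}$ (with a harmless logarithmic modification at exceptional integer values of $s$, still integrable at infinity). Once this is in place the remaining steps -- the scaling identity, the change of variable in $L^p$, and the interpolation -- are routine.
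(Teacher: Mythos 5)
Your scaling reduction and the structure of the argument are sound, and your route is genuinely different from the paper's. The paper also rescales to $t=1$, but then represents $|D|^{s}K^{\alpha}$ and $|D|^{s}\partial_{x}K^{\alpha}$ as one-dimensional oscillatory integrals, rewrites them in terms of Bessel functions $J_{\pm 1/2}(x\xi)$, integrates by parts once in the first case so as to land on a Bessel function of positive index, and invokes Lemma~1 of Pruitt to obtain the pointwise decays $|x|^{-(s+1)}$ and $|x|^{-(s+2)}$ of the profiles; the case $s=0$ of \eqref{EstFHE} is handled separately via the Blumenthal--Getoor bound $|K^{\alpha}(x)|\lesssim |x|^{-1-\alpha}$. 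Your symbol-decomposition alternative (Schwartz high-frequency piece plus a singular low-frequency piece, then $L^{1}$--$L^{\infty}$ interpolation) is more self-contained, but the low-frequency step — which you yourself flag as the delicate one — is not quite correct as stated, in two respects.

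First, the low-frequency symbol is $\chi(\eta)\,|\eta|^{s}e^{-|\eta|^{\alpha}}$, and for $\alpha\in(0,2)$ the factor $e^{-|\eta|^{\alpha}}$ is itself non-smooth at $\eta=0$, so the piece is not simply a smooth truncation of the homogeneous symbol $|\eta|^{s}$. You must either expand $e^{-|\eta|^{\alpha}}=1-|\eta|^{\alpha}+\dots$ and control the whole family of non-smooth terms $|\eta|^{s+k\alpha}$, or verify the symbol-type bounds $|\partial_{\eta}^{j}\bigl(\chi(\eta)|\eta|^{s}e^{-|\eta|^{\alpha}}\bigr)|\lesssim |\eta|^{s-j}$ and run the standard dyadic/integration-by-parts argument to extract the $|x|^{-1-s}$ decay; as written this is asserted rather than proved. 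Second, at $s=0$ the advertised decay $|x|^{-1-s}=|x|^{-1}$ is \emph{not} integrable at infinity, so your argument does not give $G_{0}\in L^{1}(\R)$. The correct statement there is that the leading (constant) part of the symbol is smooth and the first genuinely singular term is $|\eta|^{\alpha}$, which yields the integrable decay $|x|^{-1-\alpha}$ — exactly the known profile behaviour $F_{\alpha}(r)\sim r^{-(1+\alpha)}$, and exactly how the paper sidesteps the issue by treating $s=0$ separately. Alternatively, \eqref{EstFHE} needs none of this machinery: $K^{\alpha}\ge 0$ with $\int_{\R}K^{\alpha}=1$ and $\|K^{\alpha}\|_{L^{\infty}(\R)}\le \frac{1}{2\pi}\int_{\R}e^{-|\xi|^{\alpha}}d\xi<\infty$, so all $L^{p}$ norms are finite by interpolation and the $\simeq$ follows from positivity as you note. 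With these two repairs (symbol estimates for the exponential factor, and the $s=0$ endpoint handled via the $|\eta|^{s+\alpha}$ term or the trivial $L^{1}\cap L^{\infty}$ bound), your proof goes through.
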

\noindent We used the notation $|D|^s:=(-\Delta)^{s/2}$. The proof of these estimates is given in the Appendix.

\section{Existence of solutions and main properties}\label{SectionExist}

\subsection{Concept of solution: entropy and mild solutions}

We now recall some classical results for systems  \eqref{Problem1}  and \eqref{limit.problem}. In the case of the conservation law \eqref{limit.problem} the entropy formulation is as follows.

 \begin{definition}\label{entropysol} (\cite{MR735207})
By an entropy solution of system \eqref{limit.problem}  we mean a function
\[
w\in L^\infty((0,\infty),L^1(\R))\cap L^\infty((\tau,\infty)\times \R), \ \forall \tau\in (0,\infty)
\]
such that:

 C1) For every constant $k\in \R$ and $\varphi\in C_c^\infty((0,\infty)\times \R)$, $\varphi\geq 0$,  the following inequality holds
\[
\int_0^\infty \int_{\R} \Big(|w-k|\frac{\partial \varphi}{\partial t} +\sgn(w-k)(f(w)-f(k))\frac{\partial \varphi}{\partial x}\Big) dxdt\geq 0.
\]

C2) For any bounded continuous function $\psi$
\begin{equation*}
\limess_{t\downarrow 0} \int _\R w(t,x)\psi(x)dx=M\psi(0).
\end{equation*}
\end{definition}
The existence of a unique entropy solution of system \eqref{limit.problem}, as well as its properties were deeply analysed in \cite{MR735207}.
For $f(u)=|u|^{q-1}u/q$ system \eqref{limit.problem} has an unique entropy solution $U_M$, see  \cite[Section 2]{MR735207},
which is given by the $N$-wave profile
\begin{equation*} 
U_M(t,x)=\left\{
\begin{array}{ll}
(x/t)^{\frac 1{q-1}},  & 0<x<r(t),     \\[10pt]
  0, & otherwise,
\end{array}
\right.
\end{equation*}
with $r(t)=(\frac q{q-1})^{\frac{q-1}q}M^{(q-1)/q}t^{1/q}$.

Let us first recall the  representation of the fractional Laplacian in \cite{DroniouImbertFractal}. For any $\alpha\in (0,2)$: there exists  a positive constant $c(\alpha)$ such that for all $\varphi\in \mathcal{C}^2_b(\R)$, all $r>0$ and all $x\in \R$ the following holds
\begin{equation}
\label{frac.lap}
  [(-\Delta)^{\alpha/2}\varphi ](x)
=-c(\alpha)\int _{|z|\geq r}\frac{\varphi(x+z)-\varphi(x)}{|z|^{1+\alpha}}dz-c(\alpha)\int _{|z|\leq r}
\frac{\varphi(x+z)-\varphi(x)- \varphi'(x)z}{|z|^{1+\alpha}}dz.
\end{equation}
Using this representation, we introduce, according to \cite{AlibaudEntropy}, the following definition of the entropy solution for system \eqref{Problem1}.
\begin{definition}\label{DefnEntropySol} (\cite{AlibaudEntropy})
Let $u_0\in L^\infty(\R)$. We define an entropy solution of Problem \eqref{Problem1} as a function $u\in L^\infty((0,\infty)\times \R)$  such that for all $r>0$, all non-negative $\varphi\in C_c^\infty ([0,\infty)\times \R)$, all smooth convex functions $\eta:\R\rightarrow\R $ and all $\phi$ such that $\phi'=\eta' f'$, $f(s)=|s|^{q-1}s/q$,
\begin{align*}
  \int _0^\infty\int_{\R} &(\eta(u)\partial_t \varphi +\phi(u)\partial _x\varphi )dx dt\\
\nonumber  &+c(\alpha)  \int _0^\infty\int_{\R} \int _{|z|\geq r}\eta '(u(t,x)) 
\frac{u(t,x+z)-u(t,x)}{|z|^{1+\alpha}} \varphi(t,x) dzdxdt+\\
\nonumber &+c(\alpha)  \int _0^\infty\int_{\R} \int _{|z|\leq r} \eta (u(t,x))
\frac{\varphi(t,x+z)-\varphi(t,x)- \varphi'(t,x)z}{|z|^{1+\alpha}}dzdxdt\\
\nonumber &+ \int_{\R} \eta(u_0)\varphi (0,x)dx\geq 0.
\end{align*}
\end{definition}

\begin{remark}
In the above definition it is sufficient to consider the particular entropy-flux pairs, $\eta _k(s)=|s-k|$, $\varphi_k(s)=\sgn(s-k)(f(s)-f(k))$, for any real number $k$.
\end{remark}

For any $u_0\in L^\infty(\R)$ and $f:\rr\rightarrow\rr$ locally Lipschitz
 there exists a unique entropy solution of Problem \eqref{Problem1}. Entropy solutions belong to $C([0,\infty),L^1_{loc}(\R))$. If $u_0\in L^1(\R)\cap L^\infty(\R)$, then so does $u(t)$, for all $t>0$, and moreover  $u\in C([0,\infty),L^1(\R))$.
 All these properties have been proved in \cite{Droniou,AlibaudEntropy}.
 In the above papers the authors introduce a splitting in time approximation in order to prove the existence of an entropy solution. In fact for any $\delta>0$ they define the approximation $u_\delta$ in the following way: let $u^\delta(0,\cdot)=u_0$; for all $n\geq 0$, on the time interval $(2n\delta,(2n+1)\delta]$, $u^\delta$ is the solution of $\partial_tu^\delta +2(-\Delta)^{\alpha/2}u^\delta=0$ with initial condition $u^\delta(2n\delta,\cdot)$, and on the time interval $((2n+1)\delta,2(n+1)\delta]$, $u^\delta$ is the entropy solution of $\partial_tu^\delta +2\partial_x(f(u^\delta))=0$ with initial condition $u^\delta((2n+1)\delta,\cdot)$. For any initial data  in $L^\infty(\rr)$ the approximation $u^\delta$ converges in $C([0,T),L^1_{loc}(\rr))$, $T>0$, to the entropy solution of Problem \eqref{Problem1}.

In \cite{Droniou}, for $\alpha\in (1,2)$, and \cite{AlibaudEntropy} for $0<\alpha<1$, the authors prove that the entropy solutions in the sense of Definition \ref{DefnEntropySol} are solutions in the sense of distributions. Moreover when $\alpha\in (1,2)$, Droniou \cite{Droniou} proved that this distributional  solution is the unique mild solution in the sense of Definition \ref{mild} below.

\begin{definition}\label{mild}Let $u_0\in L^\infty(\rr)$ and $T>0$ or $T=\infty$.
We say that a \emph{mild solution} of Problem \eqref{Problem1} is a function $u\in L^\infty((0,\infty)\times \rr)$ which satisfies for a.e. $(t,x)\in (0,T)\times \rr$,
\begin{equation}
\label{mild.form}
  u(t,x)=(K_t^{\alpha} \star u_0)(x)  + \int_0^t (K_{t-\sigma}^{\alpha})_x \star  f(u(\sigma,x)) d\sigma.
\end{equation}
\end{definition}

The existence and regularity of the mild solution are given in the following Proposition.

\begin{proposition}   \label{PropExistence}
For any  $u_0\in L^{\infty}(\R)$  there exists a unique global mild solution $u$ of Problem \eqref{Problem1}. Moreover $u$ satisfies:

(i) $\operatorname*{ess\,inf} u_0 \le u(t,x) \le \operatorname*{ess\,sup} u_0$.

If $u_0 \in L^1(\R) \cap L^{\infty}(\R)$ then

(ii)  $u \in C([0,+\infty), L^1(\R))\cap C((0,\infty), L^\infty(\R))$. Moreover,   $\|u(t)\|_{L^1(\R)} \le  \|u_0\|_{L^1(\R)}$.

(iii) for any $s<\alpha+\min\{\alpha,q\}-1$ and $1<p<\infty$  solution  $u$ satisfies $u_t \in C((0,\infty) ,L^p( \R))$ and $u \in C((0,\infty),H^{s,p}(\R))$.
\end{proposition}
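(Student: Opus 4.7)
My strategy combines a Banach fixed-point construction for the mild formulation \eqref{mild.form} with the time-splitting approximation $u^\delta$ recalled just above the statement, which transfers the pointwise and $L^1$-bounds from the two building blocks (the fractional heat semigroup and the Kruzhkov entropy solution of the pure convection step); the regularity statement (iii) is then obtained by a bootstrap on the Duhamel formula against the kernel estimates of Lemma~\ref{decay.nucleu}.

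\textbf{Well-posedness and parts (i), (ii).} I would first run a contraction argument on the map
$$\mathcal{T}[u](t)=K_t^\alpha\star u_0+\int_0^t (K_{t-\sigma}^\alpha)_x\star f(u(\sigma))\,d\sigma$$
in the closed ball $\{\,\|u\|_{L^\infty((0,T)\times\R)}\le 2\|u_0\|_{L^\infty}\,\}$. Since $f$ is locally Lipschitz and \eqref{EstFHE3} at $s=0$, $p=1$ gives $\|(K_{t-\sigma}^\alpha)_x\|_{L^1}\lesssim (t-\sigma)^{-1/\alpha}$, which is integrable at $\sigma=t$ exactly because $\alpha>1$, the Duhamel contribution is $O(T^{1-1/\alpha})$ and $\mathcal{T}$ is a contraction for small $T$. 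The pointwise bound (i) and, when $u_0\in L^1$, the bound $\|u(t)\|_{L^1}\le\|u_0\|_{L^1}$ of (ii) are obtained by appealing to the splitting scheme $u^\delta$: each fractional diffusion step is order-preserving and $L^p$-nonexpansive (convolution with the nonnegative unit-mass kernel $K_t^\alpha$), and each convection step is order-preserving, $L^\infty$-nonincreasing and $L^1$-contractive by Kruzhkov's theory. Since $u^\delta\to u$ in $C([0,T],L^1_{loc}(\R))$ from \cite{Droniou,AlibaudEntropy}, these properties pass to the limit and, once (i) is known, yield global existence by iterating the local construction. Continuity into $L^1$ at $t=0$ comes from the $L^1$-continuity of the semigroup together with the estimate $\|\int_0^t(K_{t-\sigma}^\alpha)_x\star f(u(\sigma))\,d\sigma\|_{L^1}\lesssim\|u\|_\infty^{q-1}\|u_0\|_{L^1}\,t^{1-1/\alpha}\to 0$; continuity into $L^\infty$ on $(0,\infty)$ follows from the same Young-type bounds using $K_t^\alpha$ and $\partial_xK_t^\alpha$ as $L^1$-kernels.

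\textbf{Regularity (iii).} I would apply $|D|^s$ to \eqref{mild.form}. By \eqref{EstFHE2} and Young,
$$\||D|^sK_t^\alpha\star u_0\|_{L^p}\lesssim t^{-\frac{1}{\alpha}(1-\frac{1}{p})-\frac{s}{\alpha}}\|u_0\|_{L^1},$$
which is finite for $t>0$. A naive bound on the Duhamel term only gives $s<\alpha-1$, so the derivatives must be distributed: write $|D|^s\partial_xK_{t-\sigma}^\alpha=|D|^{s-\beta}\partial_xK_{t-\sigma}^\alpha\circ|D|^\beta$ and push $|D|^\beta$ onto $f(u(\sigma))$. The heart of the matter is a nonlinear composition estimate of the form
$$\||D|^\beta f(u)\|_{L^p}\lesssim \|u\|_{L^\infty}^{q-1}\||D|^\beta u\|_{L^p},\qquad \beta<\min\{\alpha,q\},$$
available because $f(s)=|s|^{q-1}s/q$ belongs to $C^{1,q-1}$ for $1<q<2$ (and is smoother otherwise); it can be proved by a Littlewood--Paley/Bony paralinearization or by direct pointwise fractional-difference arguments. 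Combined with \eqref{EstFHE3}, the Duhamel integrand is controlled by $(t-\sigma)^{-(s-\beta+1)/\alpha}\|u\|_\infty^{q-1}\||D|^\beta u(\sigma)\|_{L^p}$, integrable in $\sigma$ iff $s-\beta+1<\alpha$. Optimising over $\beta<\min\{\alpha,q\}$ gives exactly the stated range $s<\alpha+\min\{\alpha,q\}-1$, and the same estimates, applied to $u(t)-u(t')$, produce $u\in C((0,\infty),H^{s,p}(\R))$. Once $s>\max\{\alpha,1\}$ is within range, one reads off $u_t=-(-\Delta)^{\alpha/2}u-(f(u))_x\in C((0,\infty),L^p(\R))$ directly from the equation.

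\textbf{Main obstacle.} The genuinely delicate point is (iii): for $1<q<2$ the nonlinearity $f$ has only Hölder regularity $q-1$ at the origin, so the composition bound $\||D|^\beta f(u)\|_{L^p}\lesssim\|u\|_{L^\infty}^{q-1}\||D|^\beta u\|_{L^p}$ cannot be proved by naive product/chain rule arguments; one has to match the admissible range $\beta<\min\{\alpha,q\}$ with the paralinearization estimate and simultaneously bootstrap the regularity of $u$, checking at each step that the $\||D|^\beta u(\sigma)\|_{L^p}$ appearing in the Duhamel integral has an integrable short-time singularity. Everything else — fixed point, maximum principle, $L^1$ contraction, time continuity — reduces to standard machinery, but this composition step is where the specific exponent $\alpha+\min\{\alpha,q\}-1$ is earned.
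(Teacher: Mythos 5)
Your treatment of existence, (i) and (ii) is fine in spirit: the paper does not reprove these but cites \cite{Droniou} (where a fixed-point argument on \eqref{mild.form} and the splitting scheme already live), and your plan of passing the maximum principle and the $L^1$-contraction through the splitting approximation is a legitimate, if redundant, way to recover them. The bootstrap skeleton you propose for (iii) — apply $|D|^s$ to \eqref{mild.form}, use \eqref{EstFHE2}--\eqref{EstFHE3}, distribute derivatives between the kernel and $f(u)$, and iterate — is also the paper's skeleton. The problem is the single estimate on which your whole Step (iii) rests, namely
\[
\||D|^\beta f(u)\|_{L^p}\lesssim \|u\|_{L^\infty}^{q-1}\||D|^\beta u\|_{L^p},\qquad \beta<\min\{\alpha,q\},
\]
with $\beta$ allowed to exceed $1$. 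For $\beta<1$ this is the classical fractional chain rule for $C^1$ nonlinearities (\cite{MR1878630,MR1124294}), and the paper uses exactly that in its Step II to reach $s<\alpha$. But for $1<\beta<\min\{\alpha,q\}$ no such clean chain rule is available when $1<q<2$: writing $|D|^\beta f(u)\sim|D|^{\beta-1}\bigl(f'(u)u_x\bigr)$ and applying the Leibniz rule produces the term $\||D|^{\beta-1}f'(u)\|_{L^{p_1}}\|u_x\|_{L^{p_2}}$, and $f'(u)=|u|^{q-1}$ is only H\"older continuous of order $q-1<1$; a bound of the form $\|u\|_\infty^{q-2}\||D|^{\beta-1}u\|$ is meaningless (the exponent $q-2$ is negative and $u$ vanishes at infinity), so this term simply cannot be absorbed into the right-hand side you wrote. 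Asserting that the estimate follows "by paralinearization or pointwise fractional-difference arguments" begs the question: the paralinearization remainder for a $C^{1,q-1}$ nonlinearity is precisely of the critical order, and proving your displayed inequality for $\beta>1$ is at least as delicate as the part of the proposition it is supposed to prove.

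The paper's Step III shows what is actually needed in place of your composition estimate: one differentiates the equation, i.e. works with the Duhamel formula for $u_x$, applies the fractional Leibniz rule to $f'(u)u_x$, and controls $|D|^{s_2}|u|^{q-1}$ for $s_2<q-1$ by the H\"older-composition estimate of \cite[Proposition A.1]{MR2318286},
\[
\| |D|^{s_2}|u|^{q-1}\|_{L^{p_1}}\leq \| |D|^\beta u\|_{L^{r_2}}^{s_2/\beta}\,\| |u|^{q-1-\frac{s_2}{\beta}}\|_{L^{r_3}},
\]
which is \emph{sublinear} in $\||D|^\beta u\|$ and uses $u\in L^1(\R)\cap L^\infty(\R)$ through the factor $\||u|^{q-1-s_2/\beta}\|_{L^{r_3}}$, together with a careful choice of the exponents $(\beta,r_2,r_3,p_1)$. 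This is where the threshold $\min\{\alpha,q\}-1$ for the extra derivatives is genuinely earned; your write-up identifies the right obstacle but does not surmount it. To repair your proof you should either prove your composition inequality for $\beta\in(1,\min\{\alpha,q\})$ (and be aware that in the linear form you stated it is not a citable standard result), or replace it by the equation-for-$u_x$ argument plus the Leibniz and H\"older-composition estimates as above. A secondary, minor omission: you commute $|D|^s$ with the Duhamel time integral without comment; the paper justifies this (the $\rho$-truncation argument around \eqref{change.diff.int}), and some justification should appear in a complete proof.
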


\begin{remark}
	Since $\alpha+\min\{\alpha,q\}-1>1$ we have for any $t>0$ that $u_x(t)\in L^p(\rr)$ for any $1<p<\infty$. Moreover for any $t>0$, the map $x\mapsto u(t,x)$ is continuous. The last property also guarantees that various integrations by parts used in the paper are allowed.
\end{remark}

\begin{proof}The global existence, uniqueness and the first two properties are proved in \cite{Droniou}.
We now prove property (iii).
 Its proof relays on a classical bootstrap argument: one starts with some regularity of $u$ in the right-hand side and obtain that this right hand side term is slightly better than the hypothesis. For a nice review of the method we refer to \cite[Ch.~1.3, p.~20]{MR2233925}.
Let us fix $T>0$.	We first remark that since $u\in C([0,T],L^1(\R))\cap L^\infty((0,\infty)\times \R)$ we have that $f(u)=|u|^{q-1}u/q$ belongs to the same space. Moreover, it is sufficient to prove that for any $t>0$, $u(t)\in H^{s,p}(\rr)$ with a norm that is  bounded in any interval $[\tau,T]$ with $\tau>0$.

The main steps of the proof are as follows: we first prove that for $u\in L^\infty((0,T),L^1(\rr)\cap L^\infty(\rr))$ the right hand side in \eqref{mild.form} belongs to $H^{s,p}(\rr)$ for any $0<s<\alpha-1$, $1<p<\infty$. The next step is to use this new regularity to prove the same for $0<s<\alpha$. The last step, the most technical one, is to extend the regularity up to $s<\alpha+\min\{\alpha,q\}-1$.

\textit{Step I.} We first prove that  we gain some regularity for  $u$, $u\in C((0,T),H^{s,p}(\R))$ for any $0<s<\alpha-1$ and $1<p<\infty$. Let  $0<s<\alpha-1$. We have
\begin{equation}\label{identity.s}
  |D|^su(t)=|D|^sK^\alpha_t\ast u_0 +\int _0^t |D|^s\partial _xK^\alpha_{t-\sigma}\ast f(u(\sigma))d\sigma.
\end{equation}
Using  the decay of the $s$ derivative of $K_t^\alpha$ in \eqref{EstFHE2}, \eqref{EstFHE3}
and that $0<s<\alpha-1$ we find that for any $1<p<\infty$ the following holds for any $t\in (0,T]$:
\begin{align*}
 \| |D|^su(t) \|_{L^p(\R)}&\leq \| |D|^sK^\alpha_t\|_{L^1(\R)}\| u_0 \|_{L^p(\R)}+\int _0^t
 \||D|^{s} \partial_xK^\alpha_{t-\sigma}\|_{L^1(\R)} \|f(u(\sigma))\|_{L^p(\R)}d\sigma\\
 &\lesssim  t^{-\frac s\alpha}+\int _0^t (t-\sigma)^{-\frac{s+1}\alpha}d\sigma=
  t^{-\frac s\alpha}(1+t^{1-\frac 1\alpha})\lesssim   t^{-\frac s\alpha}.
\end{align*}

Let us now explain why identity \eqref{identity.s} holds. We know that $u_0\in L^1
(\rr)\cap L^\infty(\rr)$ and by Lemma \ref{decay.nucleu} kernel $K^\alpha_t $ satisfies $|D|^sK^\alpha_t\in L^p(\rr)$ for any $1\leq p\leq \infty$. Hence $|D|^s (K^\alpha_t\ast u_0)=(|D|^sK^\alpha_t)\ast u_0$. Let us now prove that for a.e. $x\in \rr$ the following holds
\begin{equation}\label{change.diff.int}
  |D|^s \int _0^t \partial _xK^\alpha_{t-\sigma}\ast f(u(\sigma))d\sigma= \int _0^t |D|^s\partial _xK^\alpha_{t-\sigma}\ast f(u(\sigma))d\sigma.
  \end{equation}
  For any $\rho>0$, the Tonelli-Fubini theorem can be applied to obtain that
\begin{equation}\label{eq.221}
  |D|^s \int _0^{t-\rho} \partial _xK^\alpha_{t-\sigma}\ast  f(u(\sigma))d\sigma= \int _0^{t-\rho} |D|^s\partial _xK^\alpha_{t-\sigma}\ast f(u(\sigma))d\sigma.
\end{equation}
Indeed, \eqref{eq.221} is true since we avoid the singularity of $K^\alpha_{t}$ at $t=0$. Moreover, as $\rho\rightarrow 0$,  $0<s<\alpha-1$, using \eqref{EstFHE3} we obtain that for any $1\leq p\leq \infty$ the following holds
\begin{align*}
\int_{t-\rho}^t &\| |D|^s \partial _xK^\alpha_{t-\sigma}\ast  f(u(\sigma))\|_{L^p(\rr)} d\sigma  \le \int_{t-\rho}^t \||D|^s \partial _xK^\alpha_{t-\sigma}\|_{L^1(\R)}  \|  f(u(\sigma))\|_ {L^p(\R) } d \sigma \\
 &\lesssim \int _{t-\rho}^t (t-\sigma)^{-\frac{s+1}\alpha}d\sigma= \rho^{1-\frac{s+1}\alpha}  \to 0.
\end{align*}
Similarly, using \eqref{EstFHE2} it follows that
$$\int_{t-\rho}^t \| \partial _xK^\alpha_{t-\sigma}\ast  f(u(\sigma))\|_{L^p(\rr)} d\sigma    \to 0 \quad \text{as } \rho \to 0.$$
Therefore we obtain that 
\begin{equation*}
\int _0^{t-\rho} \partial _xK^\alpha_{t-\sigma}\ast  f(u(\sigma))d\sigma\rightarrow \int _0^{t} \partial _xK^\alpha_{t-\sigma}\ast  f(u(\sigma))d\sigma
\end{equation*}
and 
\begin{equation}\label{eq.222}
\int _0^{t-\rho} |D|^s\partial _xK^\alpha_{t-\sigma}\ast  f(u(\sigma))d\sigma\rightarrow \int _0^{t} |D|^s\partial _xK^\alpha_{t-\sigma}\ast  f(u(\sigma))d\sigma
\end{equation}
in any $L^p(\rr)$, $1\leq p\leq \infty$. In view of \eqref{eq.221} and \eqref{eq.222} we obtain that $|D|^s\int _0^t \partial _xK^\alpha_{t-\sigma}\ast f(u(\sigma))d\sigma$ belongs to $L^p(\rr)$ for any $1\leq p\leq\infty$ and moreover \eqref{change.diff.int} holds in $L^p(\rr)$, $1\leq p\leq\infty$, so for a.e. $x\in \rr$.

This type of arguments apply also in the rest of the paper, whenever one needs to commute $D^s:=(-\Delta)^s$ with the integral $\int_0^t$.

\medskip

\textit{Step II.} In order to extend the range of $s$ we first recall the  chain rule for fractional derivatives (see \cite[Prop. 5 (a)]{MR1878630}, \cite[Prop. 3.1]{MR1124294}). For any $0<s<1$ and $F\in C^1(\R)$ the following inequality holds
\begin{equation}
\label{frac.chain}
  \||D|^s F(u)\|_{L^p(\R)}\lesssim \|F'(u)\|_{L^{p_1}(\R)} \| |D|^s u\|_{L^{p_2}(\R)},
\end{equation}
where
  $1< p, p_2<\infty$, $1<p_1 \leq \infty$ and $\frac 1p=\frac 1{p_1}+\frac 1{p_2}$.

Let us now choose two positive numbers $s_1$ and $s_2$ such that $s_1<\alpha-1$, $s_2<1$ and denote $s=s_1+s_2$. Applying estimate \eqref{frac.chain} to $F(u)=|u|^{q-1}u\in C^1(\rr)$  with $p_1=\infty$, $p_2=p$,   we obtain
\begin{align*}
 \| |D|^su(t) \|_{L^p(\R)}&\leq \| |D|^sK^\alpha_t\|_{L^1(\R)}\| u_0 \|_{L^p(\R)}+\int _0^t
 \||D|^{s_1}\partial _x K^\alpha_{t-\sigma}\|_{L^1(\R)} \|  |D|^{s_2}f(u(\sigma))\|_{L^p(\R)}d\sigma\\
 &\lesssim  t^{-\frac s\alpha}+ \int _0^t (t-\sigma)^{-\frac{s_1+1}\alpha}  \| |D|^{s_2} u(\sigma)\|_{L^p(\R)}.
\end{align*}
Assuming that $\| |D|^{s_2} u(t)\|_{L^p(\R)}\lesssim  t^{-\frac {s_2}\alpha}$ for all $t\in (0,T)$ we obtain  that for any $s<\alpha-1+s_2$ we have
\[
   \| |D|^su(t) \|_{L^p(\R)}\lesssim  t^{-\frac s\alpha} + \int _0^t (t-\sigma)^{-\frac{s_1+1}\alpha}   \sigma ^{-\frac {s_2}\alpha}d\sigma\lesssim t^{-\frac s\alpha}, \quad\ \forall t\in (0,T).
\]
This means that we always we can gain up to $\alpha-1$ derivatives with respect to the initial assumption.

Repeating the above argument and using Step I we obtain that for any $s\in (0,\alpha)$ and any $p\in (1,\infty)$ we have $u(t)\in H^{s,p}(\R)$ for all $t\in (0,T)$ and
\[
  \|| D|^s u(t)\|_{L^p(\R)}\lesssim t^{-\frac s{\alpha}}, \quad \forall \ t\in (0,T).
\]
Moreover, using the properties of the Hilbert transform we also obtain for any $s\in [0,\alpha-1)$ and any $p\in (1,\infty)$
\[
  \|| D|^{s} u_x(t)\|_{L^p(\R)}\lesssim t^{-\frac s{\alpha}}, \quad \forall \ t\in (0,T).
\]

\textit{Step III.} Let us now consider the case $s\geq \alpha$. We write the equation for $u_x$:
\[
  u_x(t)=\partial_x K^\alpha_t\ast u_0+\int _0^t \partial_x(K^\alpha_{t-\sigma})\ast f'(u)u_x(\sigma)d\sigma.
\]
Let us consider $s=s_1+s_2$ with $0<s_1<\alpha-1$ and $0<s_2<\min\{\alpha,q\}-1$. Thus
\begin{align*}
  \| |D|^{s_1+s_2}u_x(t) \|_{L^p(\R)}&\leq \| |D|^{s_1+s_2}\partial_x K_t\|_{L^1(\R)}\|u_0\|_{L^p(\rr)}\\
  &\quad +
\int _0^t \| |D|^{s_1} \partial_x K_{t-\sigma}\|_{L^1(\rr)} \| |D|^{s_2} (f'(u)u_x)\|_{L^p(\rr)}d\sigma\\
&\lesssim t^{-\frac {s+1}\alpha} +\int _0^t (t-\sigma)^{-\frac{1+s_1}\alpha}  \| |D|^{s_2} (f'(u)u_x)\|_{L^p(\rr)}d\sigma.
\end{align*}
Leibniz's rule (\cite[Th. 3]{MR1878630}, \cite[Prop. 3.3]{MR1124294}) gives us that
\[
  \| |D|^{s_2} (f'(u)u_x)\|_{p}\lesssim \| |D|^{s_2} f'(u)\|_{p_1}\|u_x\|_{p_2}+ \||D|^{s_2}u_x\|_{q_1} \|f'(u)\|_{q_2}
\]
where $\frac 1p=\frac 1{p_1}+\frac 1{p_2}=\frac 1{q_1}+\frac 1{q_2}$ and $1<p_1,q_1<\infty$, $1<p_2,q_2\leq \infty$ (Th. 3 in \cite{MR1878630} allows the case $p_2=q_2=\infty$). Choosing $q_1=p$, $q_2=\infty$ we obtain
\begin{align*}
    \| |D|^{s_1+s_2}u_x(t) \|_{L^p(\R)}&\lesssim t^{-\frac {s+1}\alpha} +I_1+I_2,
    \end{align*}
where
\[
  I_1=\int _0^t (t-\sigma)^{-\frac{1+s_1}\alpha}\| |D|^{s_2} f'(u(\sigma))\|_{p_1}\|u_x(\sigma)\|_{p_2}d\sigma
\]
and
\[
  I_2=\int _0^t (t-\sigma)^{-\frac{1+s_1}\alpha}\||D|^{s_2}u_x(\sigma)\|_{p} \|f'(u(\sigma))\|_{\infty}d\sigma.
\]
For $s_2<\alpha-1$, using Step II,   we have for any $t\in (0,T)$
\[
I_2\lesssim
   \int _0^t (t-\sigma)^{-\frac{1+s_1}\alpha} \sigma^{-\frac{1+s_2}\alpha}d\sigma\simeq t^{1-\frac 1\alpha -\frac{s+1}\alpha}\lesssim t^{  -\frac{s+1}\alpha}.
\]
It remains to estimate the first term. For $u_x$ we use the estimates from the previous step since $1<\alpha$ to obtain that $\|u_x(\sigma)\|_{p_2}\lesssim \sigma^{-\frac 1{\alpha}}$. For the term $|D|^{s_2} f'(u)$ we use the fact that $f'(u)=q|u|^{q-1}$ is H\"older continuous of order $q-1$ so for $s_2,\beta$ satisfying
 \[0<s_2<q-1<1, \quad  0<\frac{s_2}{q-1}<\beta <1,\]
  we have \cite[Proposition A.1]{MR2318286}
 \[
  \| |D|^{s_2}|u|^{q-1}\|_{p_1}\leq \| |D|^\beta u\|_{r_2}^{s_2/\beta} \| |u|^{q-1-\frac {s_2}\beta} \|_{r_3}
\]
where
\[
  \frac 1{p_1}=\frac{s_2}{ r_2\beta}+\frac 1{r_3},\quad  r_3\Big(1-\frac{s_2}{(q-1)\beta}\Big)>1.
\]
Choosing  $r_3$ large enough such that
\[
  r_3\Big( (q-1)-\frac{s_2}\beta \Big)\geq 1
\]
the last condition is satisfied and moreover the term $\| |u|^{q-1-\frac {s_2}\beta} \|_{r_3}$ belongs to $L^\infty((0,T))$
 since $u  \in L^\infty((0,T),L^1(\rr)\cap L^\infty(\rr))$. On the other hand
for $\beta <1$ we have estimates on the term $|D|^\beta u$ in the $L^{r_2}(\rr)$-norm, $r_2>1$, obtained previously. This gives us that
\begin{align*}
  I_1& \lesssim
     \int _0^t (t-\sigma)^{-\frac{1+s_1}\alpha} \| |D|^\beta u(\sigma)\|_{r_2}^{s_2/\beta} \|u_x(\sigma)\|_{p_2}d\sigma\\
   &  \lesssim \int _0^t (t-\sigma)^{-\frac{1+s_1}\alpha} \sigma ^{-\frac \beta \alpha \frac {s_2}\beta} \sigma ^{-\frac{1}\alpha}d\sigma
 = \int _0^t (t-\sigma)^{-\frac{1+s_1}\alpha} \sigma ^{-\frac{s_2+1}\alpha}d\sigma<\infty
\end{align*}
since $s_1<\alpha-1$ and $s_2<\min\{\alpha,q\}-1\leq q-1$.  To do that we have to check that for any fixed $p\in (1,\infty)$, $s_2\in (0,q-1)$ and $q\in (1,2)$ the following system has a solution $(p_1,\beta,r_2,r_3)$
\[
 p\leq p_1<\infty, \, \frac 1{p_1} =\frac{s_2}{\beta r_2}+\frac {1}{r_3}, \,  \frac{s_2}{q-1}<\beta <1, \,   (q-1)-\frac{s_2}\beta  \geq \frac 1{r_3}, \, r_2>1.
\]
In order to show the existence of $\beta, r_2,r_3,p_1$ which solves the above system we proceed as follows: Given $s_2\in (0,q-1)$ let us choose $\beta$ such that
\[
  \frac{s_2}{q-1}<\beta<1.
\]
We now choose $r_2\geq 2p$ and $r_3$ such that
\[
  r_3\geq \max\left\{2p,\frac{1}{q-1-\frac{s_2}\beta}\right\}.
\]
Thus we choose $p_1$ such that
\[
   \frac 1{p_1} =\frac{s_2}{\beta r_2}+\frac {1}{r_3}< \frac{q-1}{r_2}+\frac 1{r_3}\leq
  \frac 1{r_2}+\frac 1{r_3}\leq \frac 1p.
\]
The choice of $r_2$ and $r_3$ guarantees that $p_1\geq p$.

As a consequence of the above estimates for any  $s_2<\min\{q,\alpha\}-1 $ we can always make such a choice. Then we obtain that $u\in H^{s,p}$ for any $s<1+\alpha-1+\min\{q,\alpha\}-1=\alpha+\min\{q,\alpha\}-1$ and $1<p<\infty$.
\end{proof}

\begin{proposition} \label{PropExistEps}
Assuming that the initial data is positive and bounded  $u_0 \geq \epsilon>0$ then the unique mild solution of Problem \eqref{Problem1} satisfies \\
(i) $u(t,x)$ is also positive and bounded with $\epsilon \le u(t) \le \|u_0\|_{L^\infty(\rr)}$, for all $x \in \R$.\\
(ii) $u\in C_{b}^\infty ((0,\infty)\times \R)$.
\end{proposition}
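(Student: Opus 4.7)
The plan is to treat (i) as an immediate corollary of Proposition \ref{PropExistence}(i), and then exploit the strict positivity it gives to run a clean bootstrap for (ii). For (i), since $u_0 \geq \epsilon$ almost everywhere, $\operatorname*{ess\,inf} u_0 \geq \epsilon$, so the two-sided bound of Proposition \ref{PropExistence}(i) directly yields $\epsilon \leq u(t,x) \leq \|u_0\|_{L^\infty(\R)}$.

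The key consequence for (ii) is that $u$ now takes values in the compact interval $[\epsilon,\|u_0\|_{L^\infty}]$, on which $f(s)=s^q/q$ is a genuine $C^\infty$ function. This is exactly what removes the H\"older-continuity obstruction of $f'(u)=u^{q-1}$ that forced the threshold $s < \alpha+\min\{\alpha,q\}-1$ in Proposition \ref{PropExistence}(iii). Fixing $0<\tau<T$ and working on $[\tau,T]\times\R$, I would take as the base case of the induction the regularity $u \in C([\tau,T], H^{k,p}(\R))$ with $k=1$ and every $p\in(1,\infty)$, which is supplied by Proposition \ref{PropExistence}(iii) since $\alpha+\min\{\alpha,q\}-1>1$.

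For the inductive step, assuming $u \in C([\tau,T], H^{k,p})$ for every $p\in(1,\infty)$ and some integer $k\geq 1$, I would write
\[
|D|^{k+\beta} u(t) = |D|^{k+\beta} K_t^\alpha \ast u_0 + \int_0^t |D|^{\beta}\partial_x K_{t-\sigma}^\alpha \ast |D|^{k} f(u(\sigma))\,d\sigma,
\]
pick $0<\beta<\alpha-1$, and combine the kernel decay of Lemma \ref{decay.nucleu} with a Moser-type composition estimate $\||D|^{k} f(u)\|_{L^p}\lesssim C(\|u\|_{L^\infty},\epsilon)\|u\|_{H^{k,p}}$, valid because $f\in C^\infty([\epsilon,\|u_0\|_{L^\infty}])$. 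This closes the induction with a uniform gain of $\alpha-1>0$ derivatives per step, so iterating yields $u \in C([\tau,T], H^{s,p})$ for every $s\geq 0$, $p\in(1,\infty)$. Sobolev embedding then promotes this to $u(t,\cdot)\in C^\infty(\R)$ with all spatial derivatives uniformly bounded on $[\tau,T]\times\R$.

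Time regularity is then essentially free: the equation $u_t=-(-\Delta)^{\alpha/2}u-(f(u))_x$ together with the bounded smoothness in $x$ shows $u_t$ is bounded and $C^\infty$ in $x$ on $[\tau,T]\times\R$; differentiating the equation in $t$ repeatedly, using that $\partial_t$ commutes with $\partial_x$ and $(-\Delta)^{\alpha/2}$ and that every time derivative of $f(u)$ remains a polynomial in bounded smooth functions of $u$, gives uniform bounds on all mixed $(t,x)$-derivatives. Since $\tau,T$ were arbitrary, $u\in C_b^\infty((0,\infty)\times\R)$. The main technical obstacle is purely the bookkeeping of the composition estimate in fractional Sobolev spaces, a classical Moser/Fa\`a di Bruno type result that works precisely because the lower bound $u\geq\epsilon$ keeps us uniformly away from the singularity of $f$ at the origin.
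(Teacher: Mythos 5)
Your part (i) matches the paper exactly (it is just the maximum principle of Proposition \ref{PropExistence}(i)). The problem is in part (ii): the data in Proposition \ref{PropExistEps} satisfies $u_0\geq\epsilon>0$, so it is \emph{not} integrable, and consequently neither your base case nor your function-space framework is available. Proposition \ref{PropExistence}(iii), which you invoke to get $u\in C([\tau,T],H^{1,p}(\R))$, is stated only for $u_0\in L^1(\R)\cap L^\infty(\R)$; here $u(t,x)\geq\epsilon$ for all $x$, so $u(t)\notin L^p(\R)$ for any $p<\infty$, and likewise $f(u)\geq \epsilon^q/q$ is not in $L^p$, so the Moser-type bound $\||D|^k f(u)\|_{L^p}\lesssim \|u\|_{H^{k,p}}$ and the whole Duhamel estimate in global $H^{s,p}(\R)$ spaces do not make sense for this solution. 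This is precisely the situation the proposition is used for later (initial data $u_0+\epsilon$ with $u_0\in L^1\cap L^\infty$), so the gap is not a corner case.

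The paper avoids this entirely: it proves (i) as you do and then, since $u$ takes values in $[\epsilon,\|u_0\|_{L^\infty}]$ where $f$ is $C^\infty$, it simply cites the regularity results of Droniou--Gallou\"et--Vovelle (Proposition 5.1 and Theorem 5.2 of \cite{Droniou}), which are formulated in an $L^\infty$-based setting designed exactly for bounded, possibly non-integrable data. Your bootstrap idea could be salvaged in two ways: either run it in $L^\infty$-based spaces ($W^{k,\infty}$ or H\"older classes), using $\||D|^s\partial_x K^\alpha_t\|_{L^1}\lesssim t^{-(s+1)/\alpha}$ so that convolution against bounded functions is controlled, which is essentially what \cite{Droniou} does; or substitute $v=u-\epsilon$, which solves the same equation with nonlinearity $g(v)=f(v+\epsilon)-f(\epsilon)$ smooth on $[0,\|u_0\|_{L^\infty}]$ with $g(0)=0$ and integrable data, and then rerun both the base regularity and your induction for $v$ (note you cannot just quote Proposition \ref{PropExistence}(iii) for $v$ either, since it is proved for the specific nonlinearity $|s|^{q-1}s/q$). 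A smaller bookkeeping point: your Duhamel integral runs from $0$ while your inductive hypothesis lives on $[\tau,T]$; this is fixable with the time-weighted norms $t^{-s/\alpha}$ as in the proof of Proposition \ref{PropExistence}, but it needs to be said.
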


\begin{proof}
	Using the maximum principle in Proposition \ref{PropExistence} we have that $\epsilon\leq u(t)\leq  \|u_0\|_{L^\infty(\rr)}$ for all $t>0$. This gives us that the nonlinearity $f(s)=s^q/q$ belongs to $C^\infty((\epsilon, \|u_0\|_{L^\infty(\rr)}))$ and then the results of \cite[Proposition 5.1, Theorem 5.2]{Droniou} guarantee that
	$u\in C_{b}^\infty((0,\infty)\times \rr)$.
		\end{proof}

\subsection{Smooth approximate solutions}
Some of the estimates we need to prove in this paper require positive solutions. This is why we proceed by considering approximating the problem with positive data which, thanks to the maximum principle, also admits positive solutions. We will prove the necessary estimates for the approximating problem and then pass to the limit. Let $u_0\in L^\infty(\rr)$ nonnegative be the initial data of Problem \eqref{Problem1}. We consider the following approximating problem
\[
  \left\{ \begin{array}{ll}
  (u_\epsilon)_{t}(t,x) + (-\Delta)^{\alpha/2} u_\epsilon(t,x)+|u_\epsilon|^{q-1}(u_\epsilon)_x=0 &\text{for }t>0 \text{ and } x \in \mathbb{R}, \\[10pt]
  u_\epsilon(0,x)  =u_{0,\epsilon}(x) &\text{for } x \in \mathbb{R},
    \end{array}
    \right.
    \tag{$P_\epsilon$} \label{Problem1eps}
    \]
where $u_{0,\epsilon}$ is an approximation of $u_0$.

\begin{lemma}\label{lemma_ueps_u}
  Let $u$ be the solution of Problem \eqref{Problem1} with initial data  $u_0 \ge 0$ and let   $ u_\epsilon $ be the solution
  of Problem \eqref{Problem1eps} with initial data $u_{0,\epsilon}=u_0 + \epsilon$.
    Then for every $T>0$ we have
 \begin{equation}\label{Conv_ueps_u}
     \max _{t\in [0,T]}\|u_\epsilon (t) - u(t)\|_{L^\infty(\R)} \to 0 \quad \text{as} \quad \epsilon \to 0.
 \end{equation}
 \end{lemma}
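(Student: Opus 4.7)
The plan is to compare the two mild solutions directly via Duhamel's formula and apply a singular Gronwall argument. By Proposition \ref{PropExistence}(i), both $u$ and $u_\epsilon$ are uniformly bounded: $0\le u(t,x)\le \|u_0\|_{L^\infty(\R)}$ and $\epsilon\le u_\epsilon(t,x)\le \|u_0\|_{L^\infty(\R)}+\epsilon$. Hence for $\epsilon\in(0,1)$ both $u$ and $u_\epsilon$ take values in the compact interval $[0,\|u_0\|_{L^\infty(\R)}+1]$ on which $f(s)=|s|^{q-1}s/q$ is Lipschitz; let $L$ denote this Lipschitz constant.

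First I would write the mild formulation \eqref{mild.form} for both solutions. Since the constant function $\epsilon$ is preserved by the heat semigroup, $K_t^\alpha\star \epsilon=\epsilon$ for every $t>0$, so subtracting gives
\begin{equation*}
u_\epsilon(t,x)-u(t,x)=\epsilon+\int_0^t (K^\alpha_{t-\sigma})_x\star\bigl(f(u_\epsilon(\sigma,\cdot))-f(u(\sigma,\cdot))\bigr)(x)\,d\sigma.
\end{equation*}
Taking $L^\infty$ norms, using Young's inequality $\|g\star h\|_{L^\infty}\le\|g\|_{L^1}\|h\|_{L^\infty}$, the estimate $\|(K^\alpha_t)_x\|_{L^1(\R)}\lesssim t^{-1/\alpha}$ from \eqref{EstFHE3} (with $s=0$, $p=1$), and the Lipschitz bound on $f$, I would obtain
\begin{equation*}
\|u_\epsilon(t)-u(t)\|_{L^\infty(\R)}\le \epsilon + C L \int_0^t (t-\sigma)^{-1/\alpha}\|u_\epsilon(\sigma)-u(\sigma)\|_{L^\infty(\R)}\,d\sigma.
\end{equation*}

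Since $\alpha\in(1,2)$ the exponent $1/\alpha<1$, so the kernel singularity is integrable and the classical singular Gronwall lemma (Henry's inequality) applies: there exists $C(T)<\infty$ such that
\begin{equation*}
\|u_\epsilon(t)-u(t)\|_{L^\infty(\R)}\le C(T)\,\epsilon,\qquad \forall\,t\in[0,T].
\end{equation*}
Taking the maximum over $t\in[0,T]$ and letting $\epsilon\to 0$ yields \eqref{Conv_ueps_u}.

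The only delicate point is that $u_{0,\epsilon}=u_0+\epsilon\notin L^1(\R)$, so Proposition \ref{PropExistence}(ii) does not apply; however, Proposition \ref{PropExistence}(i) together with Definition \ref{mild} still provides a mild solution in $L^\infty((0,\infty)\times\R)$, which is all that is needed for the Duhamel comparison above. No $L^1$ integrability of $u_\epsilon$ is used in the argument.
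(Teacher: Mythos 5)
Your proposal is correct and follows essentially the same route as the paper: subtract the two mild (Duhamel) formulations, bound the difference of nonlinearities by the Lipschitz constant of $f$ on the common $L^\infty$ range, use $\|(K^\alpha_{t-\sigma})_x\|_{L^1(\R)}\lesssim (t-\sigma)^{-1/\alpha}$ with $1/\alpha<1$, and close with the singular (fractional) Gronwall lemma to get $\|u_\epsilon(t)-u(t)\|_{L^\infty(\R)}\le C(T)\epsilon$ on $[0,T]$. Your observation that $K^\alpha_t\star\epsilon=\epsilon$ is just the paper's bound $\|K^\alpha_t\|_{L^1}\|u_{0,\epsilon}-u_0\|_{L^\infty}\le\epsilon$ in disguise, and your remark that only $L^\infty$ well-posedness (not $L^1$) is needed for $u_\epsilon$ matches the paper's use of its existence results for bounded data.
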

\begin{proof}
Proposition \ref{PropExistEps} shows that there  exists a unique mild solution of Problem   \eqref{Problem1eps}     with
$u_\epsilon\in C_{b}^\infty ((0,\infty)\times \R)$
 and
  $\epsilon \le  u_\epsilon (t,x) \le \|u_0\|_{L^\infty(\rr)}+\epsilon$ for all $x\in \R$, $t\ge 0.$

For $u_0\geq 0$ the maximum principle in Proposition \ref{PropExistence} guarantees that $u$ the solution of system \eqref{Problem1} is also nonnegative. Let us choose $\epsilon\leq \|u_0\|_{L^\infty(\rr)}$ and $A=2\|u_0\|_{L^\infty(\rr)}$.
The result follows from the fact that $f(s)=s^{q}/q$ is Lipschitz on $[0,A]$ and the use of Fractional Gronwall Lemma \cite[Lemma 2.4]{Bouharguane2013}.
Indeed, using the mild formulation we find that
\[
  u(t)-u_\epsilon(t)=K_{t}^\alpha\ast (u_0-u_{0,\epsilon})+\int _0^t (K^{\alpha}_{t-s})_x \ast (f(u(s))-f(u_\epsilon(s)))ds.
\]
Then
\begin{align*}
    \| u(t)-u_\epsilon(t)\|_{L^\infty(\rr)}&\leq  \|K_{t}^\alpha\|_{L^1(\rr)} \|u_0-u_{0,\epsilon}\|_{L^\infty(\rr)}\\
    &\quad+\int _0^t \|K_{t-s}^\alpha\|_{L^1(\rr)} \| f(u(s))-f(u_\epsilon(s)) \|_{L^\infty(\rr)}ds\\
    &\leq \epsilon + C A^{q-1} \int _0^t (t-s)^{-\frac 1\alpha} \| u(s)-u_\epsilon(s)\|_{L^\infty(\rr)}ds.
\end{align*}
Since $\alpha>1$ we can apply Fractional Gronwall Lemma \cite[Lemma 2.4]{Bouharguane2013} to obtain that for any $T>0$
 there exists a positive constant $C(T)$ such that
 \[
    \| u(t)-u_\epsilon(t)\|_{L^\infty(\rr)}\leq   \epsilon\, C(T), \quad \forall \ t\in [0,T].
\]
This finishes the proof.
\end{proof}

\subsection{Hyperbolic estimates for \eqref{Problem1eps}}

For any $\epsilon>0$ we now consider initial data in Problem \eqref{Problem1eps} a function  $u_{0,\epsilon}$  such that $\epsilon\leq u_{0,\epsilon}\leq m$ and let $u_\epsilon$ be the solution of Problem \eqref{Problem1eps}.
The following is the key estimate towards the proof of the asymptotic result.

\begin{proposition}\label{prop.oleinik}
Let $1<q,\alpha\leq  2$. For any $\epsilon>0$ solution $u_\epsilon$   of Problem \eqref{Problem1eps}  satisfies the \emph{Oleinik type estimate}:
\begin{equation}\label{OleinikEps}
(u_\epsilon ^{q-1})_ x (t,x)\le \frac{1}{t}, \quad \forall t>0, \, x\in \R.
\end{equation}
\end{proposition}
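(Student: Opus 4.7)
The plan is to apply a maximum principle to $w := (u_\epsilon^{q-1})_x$, exploiting the smoothness and strict positivity of $u_\epsilon$ from Proposition~\ref{PropExistEps}. Setting $v := u_\epsilon^{q-1}$ and multiplying the PDE for $u_\epsilon$ by $(q-1) u_\epsilon^{q-2}$ gives $v_t + v v_x + (q-1) u_\epsilon^{q-2}(-\Delta)^{\alpha/2}u_\epsilon = 0$; differentiating in $x$ yields the key PDE
\[
w_t + w^2 + v w_x + \partial_x\!\left[(q-1) u_\epsilon^{q-2}(-\Delta)^{\alpha/2}u_\epsilon\right] = 0.
\]

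Assume for contradiction that $\Phi(t,x) := w(t,x) - 1/t$ has a strictly positive supremum, attained at some interior point $(t_0,x_0)$ with $t_0 > 0$; such a point exists since $w$ is smooth and bounded on each strip $[\tau,T]\times\R$, decays at spatial infinity, and the smoothing estimate $\|u_x(t)\|_\infty \lesssim t^{-1/\alpha}$ with $1/\alpha<1$ forces $\Phi\to-\infty$ as $t\to 0^+$. At this maximum $w_x = 0$, $w(t_0,x_0) > 1/t_0 > 0$ (in particular $(u_\epsilon)_x(t_0,x_0) > 0$), and $w_t\ge-1/t_0^2$. The desired contradiction $w_t \le -w^2 < -1/t_0^2$ will then follow from the nonlocal pointwise inequality
\[
\partial_x\!\left[(q-1) u_\epsilon^{q-2}(-\Delta)^{\alpha/2}u_\epsilon\right]\!\big|_{(t_0,x_0)}\ge 0,
\]
which is the main technical obstacle of the proof.

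To establish this inequality, I invoke the C\'ordoba--C\'ordoba identity for the concave function $s\mapsto s^{q-1}$ (concave since $1<q\le 2$, $s>0$): it writes $(q-1)u_\epsilon^{q-2}(-\Delta)^{\alpha/2}u_\epsilon = (-\Delta)^{\alpha/2}v - E$, where the nonnegative deficit is
\[
E(x) = c(\alpha)\!\int_\R \frac{v(x)-v(x+z) - (q-1)u_\epsilon(x)^{q-2}[u_\epsilon(x)-u_\epsilon(x+z)]}{|z|^{1+\alpha}}\,dz.
\]
Hence the quantity to bound equals $(-\Delta)^{\alpha/2}w(x_0)-E_x(x_0)$, and combining the integral representations of both terms at $x_0$ condenses it into $c(\alpha)\int_\R I(z)|z|^{-1-\alpha}\,dz$, where
\[
I(z) = (q-1)(q-2) A^{q-3} A'(A-B) + (q-1) A^{q-2}(A'-B')
\]
and $A := u_\epsilon(x_0)$, $A' := (u_\epsilon)_x(x_0)$, $B := u_\epsilon(x_0+z)$, $B' := (u_\epsilon)_x(x_0+z)$.

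The crux is proving $I(z)\ge 0$ pointwise. The maximality of $w$ at $x_0$ gives $A^{q-2}A' \ge B^{q-2}B'$, equivalently $B' \le A'(A/B)^{q-2}$. Substituting this bound into $I(z)$ and setting $t := B/A>0$ yields
\[
I(z) \ge (q-1) A' A^{q-2}\,h(t), \qquad h(t) := (q-1) + (2-q) t - t^{2-q}.
\]
The function $h$ satisfies $h(1)=0$, $h'(t) = (2-q)(1-t^{1-q})$ vanishing only at $t=1$, and $h''(t) = (2-q)(q-1)t^{-q}>0$ on $(0,\infty)$; therefore $t=1$ is the global minimum and $h\ge 0$. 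Since $A,A'>0$, this gives $I(z)\ge 0$ for every $z\in\R$, closing the contradiction. The hypothesis $q\le 2$ enters only through the concavity of $s^{q-1}$ (which in turn provides the nonnegativity of $h$); in the degenerate case $q=2$ one has $v=u$ and $h\equiv 0$, and the argument reduces to the standard Oleinik estimate for the fractional Burgers equation.
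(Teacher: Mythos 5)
Your algebraic core is correct: the PDE for $w=(u_\epsilon^{q-1})_x$ is the same one the paper derives, and your inequality $h(t)=(q-1)+(2-q)t-t^{2-q}\ge 0$ is exactly equivalent (after the substitution $z=u_\epsilon^{q-1}$, $\beta=\frac{2-q}{q-1}$) to the Young-inequality positivity used in the paper's Lemma~\ref{positive.term}. The genuine gap is structural: you assume that the supremum of $\Phi=w-1/t$ is \emph{attained} at a point $(t_0,x_0)$, justified by the claim that $w$ "decays at spatial infinity". This is not available. The proposition allows any initial datum with $\epsilon\le u_{0,\epsilon}\le m$ (bounded above and below, no integrability or decay), so $w(t,\cdot)$ need not tend to zero as $|x|\to\infty$; and even for the specific choice $u_{0,\epsilon}=u_0+\epsilon$ with $u_0\in L^1\cap L^\infty$, decay of $(u_\epsilon^{q-1})_x$ at infinity is nowhere established and would require extra work (the regularity estimates of Proposition~\ref{PropExistence} are for integrable data, not for the shifted solution which tends to $\epsilon$ at infinity). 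Your contradiction argument uses global spatial maximality of $w(t_0,\cdot)$ in an essential way (to get $B^{q-2}B'\le A^{q-2}A'$ for \emph{all} $z$), so without attainment the proof collapses. This is precisely the difficulty the paper's proof is built around: it works with $W(t)=\sup_x w(t,x)$, shows $W$ is locally Lipschitz, takes near-maximizing points $x_n$ with $w(t,x_n)=W(t)-1/n$ and uses a lemma of Karch to get $w_x(t,x_n)\to 0$, splits the nonlocal term at a scale $r_n\to 0$ with $nr_n^\alpha\to\infty$ to control the errors, and closes with an a.e.\ differential inequality $W'+W^2+(\text{nonnegative})\,W\le 0$. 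To repair your argument you would either have to prove attainment (decay of $w$, uniformly on time strips, plus a restriction to $(0,T]$ so the temporal sup is not lost at $t=\infty$) or pass to near-maximizers, at which point you inherit essentially all the technical steps of the paper's proof.

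A secondary, repairable issue: for $\alpha\in[1,2)$ the representation you use for $\partial_x\bigl[(q-1)u_\epsilon^{q-2}\La u_\epsilon\bigr]$ as $c(\alpha)\int_\R I(z)|z|^{-1-\alpha}\,dz$ is not absolutely convergent, since $I(z)=O(|z|)$ near $z=0$; one must work with the compensated representation \eqref{frac.lap} or a principal value. The conclusion survives because your lower bound $L(z)=(q-1)A'A^{q-2}h(B/A)$ is $O(|z|^2)$ near zero (hence integrable) and $I-L\ge 0$, so the principal value dominates $\int L\ge 0$; but this needs to be said, and it is exactly the role of the two-scale splitting and the $r^{2-\alpha}$ remainder estimate in the paper. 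Also note that at a maximum with $t_0$ equal to the right endpoint of a time interval you only get $\Phi_t\ge 0$, which is fine, but the restriction to finite time intervals should be made explicit.
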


\begin{remark}We emphasize here that the result holds for all $q,\alpha\in (1,2]$ without the assumption $q<\alpha$. When $\alpha=2$ this estimate has been obtained in \cite{EVZArma}.
A similar result has been proved in \cite{AlibaudAndreianov}  when $\alpha\in (0,1)$ and $q=2$ for the regularised equation
\[
  u_{t} + (-\Delta)^{\alpha/2} u+|u|^{q-1}u_x-\epsilon u_{xx}=0.
\]
We are not able to use the barrier method  as in \cite{AlibaudAndreianov}. The difficulty comes from the fact that one should prove that for a suitable function, i.e. $\Phi(x)=(1+x^{2})^{\gamma}$, the term
 \[
A(w,z)=- (2-q)w \La [z^{\beta+1}] + z\La [ z^{\beta}w]
\]
satisfies $z^{-(\beta+1)}(t,x)A(\Phi(x),z(t,x))\geq -C_{z} $ for all $x\in \rr$ and $t>0$ where $z$ is a $C_b^{\infty}((0,\infty)\times \rr)$ function and $\beta=\frac{2-q}{q-1}>0$.
 Observe that in the case $q=2$ we have $\beta=0$,  $A(w,z)=\La w$ and the required estimate holds by choosing $\gamma$ suitably.
\end{remark}
 \begin{proof}We consider $\alpha\in (1,2)$ since the case $\alpha=2$ has been treated in \cite{EVZArma}.
 Let $z(t,x)=(u_\epsilon)^{q-1}(t,x)$. For simplicity we will not make explicit the dependence on $\epsilon$.
  Then $z\in C_b^{\infty}((0,\infty)\times \rr)$ and
  $$
  z_t + (q-1)z^{1- \frac{1}{q-1}}  \La [z^{\frac{1}{q-1}}] + z z_x=0.
 $$
 Let $w(t,x)=z_x(t,x)$. Then $w\in  C_b^{\infty}((0,\infty)\times \rr)$ and it verifies
 \begin{equation*}
 w_t + w^2 + zw_x +z^{-\beta -1} A(w,z)=0.
\end{equation*}

 We  continue as in \cite{CazacuIgnatPazoto} following some ideas from \cite{Droniou,KarchQualitProp2009}.
Let us denote $W(t)=\sup _{x\in \rr} w(t,x)$. Since $z$ is $C^k_b((0,\infty)\times \rr)$ using the same  arguments as in \cite[Th. 1.18]{KarchQualitProp2009} we have  that $W$ is locally Lipschitz. In particular $W$ is absolutely continuous so differentiable almost everywhere.
We now differentiate $W(t)$ for $t>0$ and obtain the equation it satisfies. Let us choose $0< s<t$. We use Taylor's expansion in the time variable $t$:
\[
   w( t,x)\leq w(t-s,x)+ s w_t( t,x) + C s^2\leq W(t-s)+s w_t ( t,x) + C s^2.
\]
It follows that
\begin{equation}
\label{eq.w.1}
  w(t,x)+s \Big(w^2(t,x)+ z w_x(t,x) +z^{-\beta-1}(t,x)   A(w(t,x),z(t,x))\Big)\leq W(t-s)+Cs^2.
\end{equation}

Let us fix $t>0$ and consider the points $x_n$ such that $w(x_n , t)=W(t)-1/n$. Following  \cite[Lemma~1.17]{KarchQualitProp2009}  we have
\begin{equation*}
  \lim _{n\rightarrow\infty} w_x(t,x_n)= 0.
\end{equation*}
Moreover, since the sequence $(z(t,x_n))_{n\geq 1}$ is bounded we can assume that, up to a subsequence, $z(t,x_n)\rightarrow p(t)$ for some  function $p(t)\in [\epsilon,m]$.

Now we evaluate \eqref{eq.w.1} at the point $x=x_n$. Letting $n\rightarrow\infty$ we can easily see that, up to a subsequence,
\[
  w(t,x_n)+s( w^2(t,x_n)+zw_x(t,x_n))\rightarrow  W(t)+ s W^2(t).
\]
We claim that up to a subsequence
\begin{equation}
\label{below.A}
  A(w(t,x_n),z(t,x_n))\geq W(t) I_n(t)-o(1)
\end{equation}
 for some bounded non-negative  sequence $I_n(t)$. This implies that, up to a subsequence, $I_n(t)\rightarrow q(t)$ where $q(t)\geq 0$.
 This implies that inequality \eqref{eq.w.1} becomes
 \[
  W(t)+s\big(W^2(t) +p^{-\beta-1}(t) q(t) W(t)\big)\leq W(t-s)+Cs^2.
\]
Letting $s\rightarrow 0$ we obtain that for a.e. $t>0$,  $W$ satisfies
\[
  W'(t)+W^2(t)+p^{-\beta-1}(t) q(t) W(t)\leq 0.
\]
Now it follows using classical ODEs arguments  (see for example \cite[p. 3136]{CazacuIgnatPazoto})  that $W$ satisfies
\[
  \max\{W(t),0\}\leq \frac 1t, \quad \forall \ t>0.
\]

To finish the proof it remains to prove claim \eqref{below.A}.
To do that, we use representation \eqref{frac.lap} with suitable $r=r_n$ depending on $x_n$ that will be specified latter. Using that $\beta/(\beta+1)=2-q$  we write $A(w,z)$ as follow
\begin{align*}
  A&(w(x),z(x))/c(\alpha)\\
  &=-z(x)\int _{|y|>r}\frac{z^\beta w(x+y)-z^\beta w(x)}{|y|^{\alpha+1}}dy
  -z(x)\int _{|y|<r}\frac{z^\beta w(x+y)-z^\beta w(x) - y (z^\beta w)_x(x)}{|y|^{\alpha+1}}dy\\
  &+\frac{\beta}{\beta+1} w(x)\int _{|y|>r}\frac{z^{\beta+1} (x+y)-z^{\beta+1}(x)}{|y|^{\alpha+1}}dy \\
  & +
  \frac{\beta}{\beta+1} w(x)\int _{|y|<r}\frac{z^{\beta+1}(x+y)-z^{\beta+1}(x) - y (z^{\beta+1})_x(x)}{|y|^{\alpha+1}}dy\\
  &=\int _{|y|>r}\Big[ w(x)\big( \frac{z^{\beta+1}(x)}{\beta +1}+\frac{\beta z^{\beta+1}(x+y)}{\beta +1}\big) -w(x+y)z^{\beta}(x+y)z(x)\Big]\frac{dy}{|y|^{\alpha+1}}+R(r,w,z),
\end{align*}
where we have collected the integrals in the ball of radius $r$ in the reminder term $R$. It is easy to evaluate each integral in $R$ and prove that
\[
  |R(t,w,z)|\lesssim r^{2-\alpha} (\|z\|_{\infty} \| (z^\beta w)_{xx})\|_{\infty}+\|w\|_{\infty} \| (z^{\beta+1})_{xx}\|_\infty )\leq C(\epsilon,\beta, \|z(t)\|_{C^3_b(\rr)})
  r^{2-\alpha}.
\]
Let us evaluate $A(w,z)$ at the point $x=x_n$. Using that $w(t,x_n)=W(t)-1/n$ we obtain
\begin{align*}
  A&(w(t,x_n),z(t,x_n) )\\
  &\geq \int _{|y|>r}\Big[ w(t,x_n)\big( \frac{z^{\beta+1}(t,x_n)}{\beta +1}+\frac{\beta z^{\beta+1}(t,x_n+y)}{\beta +1}\big) \\
  &\qquad\qquad\qquad\qquad\qquad\qquad -w(t,x_n+y)z^{\beta}(t,x_n+y)z(t,x_n)\Big]\frac{dy}{|y|^{\alpha+1}} -Cr^{2-\alpha}\\
  &=\int _{|y|>r}\Big[ W(t)\big( \frac{z^{\beta+1}(t,x_n)}{\beta +1}+\frac{\beta z^{\beta+1}(t,x_n+y)}{\beta +1}\big) -w(t,x_n+y)z^{\beta}(t,x_n+y)z(t,x_n)\Big]\frac{dy}{|y|^{\alpha+1}}\\
  &\quad -\frac 1n \int _{|y|>r}   \big( \frac{z^{\beta+1}(x_n)}{\beta +1} +\frac{\beta z^{\beta+1}(x_n+y)}{\beta +1}\big)\frac{dy}{|y|^{\alpha+1}}-Cr^{2-\alpha}\\
  &\geq W(t) \int _{|y|>r}\Big[ \big( \frac{z^{\beta+1}(t,x_n)}{\beta +1}+\frac{\beta z^{\beta+1}(t,x_n+y)}{\beta +1}\big) -z^{\beta}(t,x_n+y)z(t,x_n)\Big]\frac{dy}{|y|^{\alpha+1}}\\
  &\quad-Cr^{2-\alpha}-\frac{\|z(t)\|_\infty^{\beta+1}}{nr^\alpha}\\
  &:=W(t)I_n(z(t),r,x_n) -Cr^{2-\alpha}-\frac{C}{nr^\alpha}.
\end{align*}
Let us now choose $r=r_n$ such that $r_n\rightarrow 0$ and $nr_n^\alpha\rightarrow\infty$ as $n\rightarrow\infty$. Lemma \ref{positive.term} below shows that $I_n(t)=I(z(t),r_n,x_n)$ is well defined and is uniformly bounded. Moreover, $I_n(t)\geq 0$: indeed this follows by applying Young's inequality $ \frac{a^p}{p}+\frac{b^q}{q}\ge ab$, $\frac{1}{p}+\frac{1}{q}=1$ for $a=z(t,x_n)$, $b=z(t,x_n+y)$, $p=\beta+1$, $q=\frac{\beta+1}{\beta}$.
Hence
\[
  A(w(t,x_n),z(t,x_n)\geq W(t)I_n(t)-o(1)
\]
and claim \eqref{below.A} is proved. The proof is now complete.
 \end{proof}

 \begin{lemma}
\label{positive.term} Let $z\in C^1_b(\rr)$ such that $0<\epsilon\leq z\leq m$ and $\alpha\in (0,2]$, $\beta>0$.
The  function
\[
I(z,r,x)= \int _{|y|>r}\left(\frac 1{\beta+1} z^{\beta+1}(x)+\frac{\beta}{1+\beta}z^{\beta+1}(x+y) -z(x)z^{\beta}(x+y)\right)\frac{dy}{|y|^{1+\alpha}},
\]
defined for $r>0, \ x\in \rr,$	satisfies
	\[
  |I(z,r,x)|\leq C(\beta,\epsilon,m) \|z\|_{C^1_b(\rr)}^2.
\]
\end{lemma}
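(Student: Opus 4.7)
The plan is to recognise the integrand as a second-order Taylor remainder in $z$, which upgrades its apparent linear vanishing at $y=0$ into a quadratic one --- exactly what is needed to compensate the singularity of the kernel $|y|^{-1-\alpha}$. Set $\phi(t)=t^{\beta+1}/(\beta+1)$, so that $\phi'(t)=t^{\beta}$ and $\phi''(t)=\beta t^{\beta-1}$, and abbreviate $a=z(x)$, $b=z(x+y)$. The integrand reads
$$
G(y):=\phi(a)+\beta\,\phi(b)-a\,\phi'(b),
$$
and the identity $b\,\phi'(b)=b^{\beta+1}=(\beta+1)\phi(b)$ allows one to rewrite it as a pure Taylor remainder
$$
G(y)=\phi(a)-\phi(b)-(a-b)\phi'(b)=\int_{b}^{a}(a-s)\,\phi''(s)\,ds.
$$
This single algebraic rearrangement is the heart of the argument.

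From the integral expression one reads off at once both the sign $G(y)\ge 0$ (which confirms in passing the Young-inequality claim preceding the lemma) and the pointwise bound
$$
0\le G(y)\le \tfrac{1}{2}\,\|\phi''\|_{L^{\infty}([\epsilon,m])}\,\bigl(z(x)-z(x+y)\bigr)^{2},
$$
where $\|\phi''\|_{L^{\infty}([\epsilon,m])}=\beta\max\{\epsilon^{\beta-1},m^{\beta-1}\}<\infty$ thanks to $\epsilon\le z\le m$. The problem is thus reduced to bounding $J(r,x):=\int_{|y|>r}(z(x+y)-z(x))^{2}|y|^{-1-\alpha}\,dy$ uniformly in $r>0$ and $x\in\rr$.

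Next I would split the domain in the standard way. On the far field $\{|y|\ge 1\}$ the trivial estimate $|z(x+y)-z(x)|\le 2\|z\|_{\infty}$ gives a contribution $\lesssim \|z\|_{\infty}^{2}/\alpha$. On the near field $\{r<|y|<1\}$ the $C^{1}$ bound $|z(x+y)-z(x)|\le \|z'\|_{\infty}|y|$ converts the integrand into $\|z'\|_{\infty}^{2}|y|^{1-\alpha}$, whose integral over $\{|y|<1\}$ is finite and independent of $r$ precisely because $1-\alpha>-1$, yielding $\lesssim \|z'\|_{\infty}^{2}/(2-\alpha)$. Combining the two pieces gives
$$
|I(z,r,x)|\le \tfrac{1}{2}\|\phi''\|_{L^{\infty}([\epsilon,m])}\,J(r,x)\le C(\beta,\epsilon,m)\bigl(\|z\|_{\infty}^{2}+\|z'\|_{\infty}^{2}\bigr)\le C(\beta,\epsilon,m)\,\|z\|_{C^{1}_{b}(\rr)}^{2},
$$
as claimed.

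The main, and really only, obstacle is spotting the Taylor-remainder structure of $G(y)$: without it the naive bound $|G(y)|\lesssim |y|$ produces a logarithmically divergent near-origin integral as $\alpha\uparrow 2$, which is why the crude approach fails. Once that identity is in hand, the remainder of the argument is elementary near-field/far-field bookkeeping for a singular kernel of order $\alpha<2$.
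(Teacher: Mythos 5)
Your proof is correct and follows essentially the same route as the paper: both exploit that the integrand vanishes to second order in $z(x+y)-z(x)$ (the paper via the scalar inequality $|\beta t^{\beta+1}+1-(\beta+1)t^{\beta}|\leq C(\beta)\max\{1,t^{\beta-1}\}|t-1|^2$ with $t=z(x+y)/z(x)$, you via the exact Taylor-remainder identity for $\phi(t)=t^{\beta+1}/(\beta+1)$), and then both bound $\int_{|y|>r}|z(x+y)-z(x)|^2|y|^{-1-\alpha}dy$ by splitting into $|y|<1$ (using $\|z'\|_\infty$) and $|y|>1$ (using $\|z\|_\infty$). Your packaging is marginally cleaner in that it gives $G\geq 0$ for free (the paper gets this separately by Young's inequality in the proposition), and, like the paper's own argument, it implicitly needs $\alpha<2$ for the near-field integral, which is harmless since the lemma is only invoked for $\alpha\in(1,2)$.
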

\begin{proof}
Observe that for any $\beta>0$ we have $\beta t^{\beta+1} +1-(\beta+1)t^{\beta}\sim (t-1)^2$ as $t\sim 1$. Then the following inequality holds
\[
  |\beta t^{\beta+1} +1-(\beta+1)t^{\beta}|\leq C(\beta) \max\{1,t^{\beta-1}\} |t-1|^2, \quad \forall t>0.
\]
Applying to $t=z(x+y)/z(x)$ and integrating on $y$ we obtain that
\begin{align*}
  \int _{|y|>r}& \left(\frac 1{\beta+1} z^{\beta+1}(x)+\frac{\beta}{1+\beta}z^{\beta+1}(x+y) -z(x)z^{\beta}(x+y)\right)\frac{dy}{|y|^{1+\alpha}} \\
  &\leq C(\beta, \epsilon,m) \int _{|y|>r} \frac{|z(x+y)-z(x)|^2}{|y|^{1+\alpha}}dy\\
  &\leq C(\beta, \epsilon,m) \Big(\|z_x\|_{L^\infty(\rr)}^2\int _{|y|<1}\frac 1{|y|^{\alpha-1}}dy+\|z\|_{L^\infty(\rr)}^2 \int _{|y|>1}\frac 1{|y|^{\alpha+1}}dy\Big).
\end{align*}
The proof is now complete.
\end{proof}

\subsection{Estimates for the solution of Problem \eqref{Problem1} }

 We will prove  various estimates for $u$ the mild solution of Problem  \eqref{Problem1} by using as the starting point the estimate in Proposition \ref{prop.oleinik}.
  We recall that $u \in C((0,\infty), H^{s,p}(\R))$ for any $s<\alpha+q-1$ and $1<p<\infty$,
according to Proposition \ref{PropExistence}.
Remark that  \eqref{Conv_ueps_u} and the regularity of $u$ implies that $u_\epsilon (t,x) \to u(t,x)$ for all $t>0$, $x\in \R$, where 
$u_\epsilon$ is the solution of  Problem \eqref{Problem1} with initial data $u_{0,\epsilon}=u_0+\epsilon$.

\begin{lemma}\label{est.for.u}
 Let $u$ be the solution of Problem \eqref{Problem1} with nonnegative initial data $u_0\in L^1(\R)\cap L^\infty(\rr)$. Then the following estimates hold:
 \begin{enumerate}
 \item  Mass conservation: $\int_{\R}u(t,x)dx= M, \quad \forall t\ge 0.$
  \item Hyperbolic estimate: $\displaystyle
(u^{q-1})_ x (t,x)\le \frac{1}{t}$ for all $ t>0 $ in $\mathcal{D'}(\rr)$.
   \item\label{Linf}Upper bound: $\displaystyle 0\leq u(t,x) \le \left(\frac{q}{q-1}M \right)^{1/q}  t^{-1/q} $ for all $t>0,\ x\in \R.$
   \item Decay of the $L^p$-norm, $1\leq p\leq \infty$:
   \[\displaystyle \|u(t,\cdot)\|_{L^p(\R)}\le \left(\frac{q}{q-1}\right)^\frac{p-1}{pq}M^{\frac{p-1}{pq}+\frac{1}{p}}\, t^{-\frac{1}{q}\left(1-\frac{1}{p}\right)},\  \forall \ t>0.
   \]
   \item  Decay of the spatial derivative: $ \displaystyle u_x(t,x) \le C(q) M ^{\frac{2-q}{q}}  t^{-\frac{2}{q}}$  for all $ t>0 $, a.e. $x\in \rr$.
   \item  $W^{1,1}_{\text{loc}}(\R)$ estimate:
   \[
   \int_{|x|\leq R} |u_x(t,x)|dx \le  2R\,  C(q) M ^{\frac{2-q}{q}}  t^{-\frac{2}{q}} + 2\left(\frac{q}{q-1}M \right)^{1/q}  t^{-1/q},\quad \forall  t>0.
   \]
   \item Energy estimate: for every $0<\tau<T$,
  \begin{equation*}
  \displaystyle \int_\tau^T \int_\R |(-\Delta)^{\alpha/4} u(t,x)|^2 dx dt \le \frac{1}{2}\int_{\R}u^2(\tau,x)dx \le \frac{1}{2}\left(\frac{q}{q-1} \right)^{1/q}  \tau^{-1/q} M^{\frac{q+1}{q}}.
  \end{equation*}
 \end{enumerate}
\end{lemma}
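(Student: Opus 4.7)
My plan is to derive the seven items sequentially: (1) and (2) come respectively from the mild formulation and from passing to the limit in Proposition \ref{prop.oleinik} via the approximations $u_\epsilon$ of Lemma \ref{lemma_ueps_u}; item (3) is the crux and combines (1) and (2); and (4)--(7) follow easily from (1)--(3).

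For (1), I would integrate the mild formula \eqref{mild.form} in $x$: since $\int_\R K^\alpha_t = 1$ and the odd function $(K^\alpha_{t-s})_x$ satisfies $\int_\R (K^\alpha_{t-s})_x = 0$, Fubini yields $\int u(t) = \int u_0 = M$. For (2), I test the pointwise bound from Proposition \ref{prop.oleinik} against a nonnegative $\varphi \in C_c^\infty(\R)$ and integrate by parts to obtain $-\int u_\epsilon^{q-1}(t,\cdot)\, \varphi' \leq (1/t)\int \varphi$. Lemma \ref{lemma_ueps_u} provides uniform convergence $u_\epsilon(t,\cdot) \to u(t,\cdot)$ on $\R$, so dominated convergence lets me pass to the limit and conclude $(u^{q-1})_x(t,\cdot) \leq 1/t$ in $\mathcal{D}'(\R)$.

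The main step, and the one I expect to be the main obstacle, is (3). By Proposition \ref{PropExistence} (iii), $u(t,\cdot)$ is continuous and vanishes at infinity, so $A := \|u(t,\cdot)\|_\infty$ is attained at some $x_0$. Continuity of $u^{q-1}(t,\cdot)$ combined with item (2) upgrades to the pointwise monotonicity of $x \mapsto u^{q-1}(t,x) - x/t$, which forces
\[
u(t,x) \geq \left(A^{q-1} - \frac{x_0 - x}{t}\right)^{\frac{1}{q-1}}, \qquad x_0 - t A^{q-1} \leq x \leq x_0.
\]
Integrating this lower bound and invoking (1) gives, after the change of variable $s = A^{q-1} - (x_0-x)/t$,
\[
M \geq \int_{x_0 - tA^{q-1}}^{x_0}\!\!\!\left(A^{q-1} - \frac{x_0-x}{t}\right)^{\frac{1}{q-1}} dx = t\int_0^{A^{q-1}}\!\! s^{\frac{1}{q-1}}\, ds = \frac{q-1}{q}\, t\, A^q,
\]
which rearranges to the stated bound on $A$.

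The remaining items follow quickly. Item (4) is the interpolation $\|u(t)\|_p \leq \|u(t)\|_1^{1/p}\|u(t)\|_\infty^{1-1/p}$ combined with (1) and (3). For (5), on $\{u > 0\}$ the chain rule gives $u_x = u^{2-q}(u^{q-1})_x/(q-1)$, whence (2) together with the $L^\infty$-decay in (3) yields the claimed $t^{-2/q}$ bound; the inequality is trivial where $u$ vanishes (interior minima). Item (6) uses the identity $|u_x| = 2(u_x)_+ - u_x$: integrating over $[-R,R]$ controls the positive part by (5), while the boundary contribution $u(-R) - u(R)$ is controlled by (3). For (7), the $H^{s,p}$-regularity of Proposition \ref{PropExistence} (iii) legitimises multiplying \eqref{Problem1} by $u$ and integrating over $(\tau,T) \times \R$: the nonlocal term produces $\|(-\Delta)^{\alpha/4} u(t)\|_{L^2}^2$, the convective term is the total derivative of $u^{q+1}/(q+1)$ and integrates to zero, and bounding $\|u(\tau)\|_{L^2}^2$ by (4) with $p=2$ concludes.
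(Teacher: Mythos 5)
Your proposal is correct and follows essentially the same route as the paper: mass conservation from the mild formula, passage to the limit in the Oleinik estimate of Proposition \ref{prop.oleinik} via the approximations $u_\epsilon$, the integrated lower bound against the mass to get the $L^\infty$ decay, and then interpolation, the sign decomposition of $u_x$, and the energy identity for the remaining items. The only (harmless) cosmetic differences are that the paper derives the upper bound by integrating the inequality $u^{q-1}(t,y)\ge u^{q-1}(t,x)-(x-y)/t$ at an arbitrary point $x$ rather than at a maximizer, and obtains item (5) from a mean-value/difference-quotient argument instead of your chain rule on $\{u>0\}$.
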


\begin{proof} 
Using  the regularity obtained in
Proposition \ref{PropExistence} ii), we can integrate the integral representation \eqref{mild.form} we respect to the $x$ variable.  Using Fubini's theorem, we obtain the mass conservation property.
Alternatively, the mass conservation also follows from the distributional formulation. In fact,   a classical approximation argument allows to write for any $\psi\in C_c^2(\rr)$  the following identity
\[
  \int _{\rr} u (t,x)\psi (x)dx-\int _{\rr}u (0,x)\psi(x)dx=\int _0^t \int _{\rr} f(u )\psi_x-\lambda^{q-\alpha}\int _0^t \int _{\rr} u  (-\Delta)^{\alpha/2}\psi.
\]
We choose as test function $\psi_R(x)=\psi(x/R)$ where $\psi\in C_c^2(\rr)$, $0\leq \psi\leq 1$,
$\psi(x)\equiv 1$ for $|x|\leq 1 $ and $\psi(x)\equiv 0$ for $|x|\geq 2$. Then $(\psi_R)_x=O(R^{-1})$ and $ (-\Delta)^{\alpha/2}\psi_R =O (R^{-\alpha}).$   Letting $R\rightarrow \infty$ gives us the conservation of the mass.

For the second property we consider $u_\epsilon $   the solution of Problem \eqref{Problem1eps} with
$u_{0,\epsilon}=u_0+\epsilon$. Then by Lemma \ref{lemma_ueps_u} we have that $u_\epsilon(t) \to u(t)$ in $ L^\infty(\R)$.
This way we are able to pass to the limit estimate \eqref{OleinikEps} in a distributional sense.

The regularity results obtained in Proposition
\ref{PropExistence} show that $u(t)$ is a continuous function for any $t>0$.
 Using estimate \eqref{OleinikEps} for $u_\epsilon$ and letting $\epsilon\rightarrow 0$ imply that
\begin{equation}\label{ineq1}
    u^{q-1}(t,x)-u^{q-1}(t,y)\leq \frac{x-y}t, \quad \forall \, y<x, \quad \forall \, t>0.
  \end{equation}
 The proof of the third estimate follows from \eqref{ineq1}: we fix $x\in \R$ and we integrate  in $y$ on the interval $I=\{y \in \R: y<x \text{ and } u^{q-1}(t,x)-\frac{x-y}t \ge 0\}$. Thus
\begin{align*}
M&=\int_{\R}u(t,y)dy \ge \int_{I} \left(u^{q-1}(t,x)-\frac{x-y}t \right)^{1/(q-1)}dy =\frac{1}{t^{1/(q-1)}}\int_0^{t u^{q-1}(t,x)} z^{1/(q-1)} dz \\
&=\frac{q-1}{q} t u^{q}(t,x).
\end{align*} 
Inequality (iv) is a consequence of the mass conservation and previous estimate.

Using the intermediate value theorem we obtain that
 \[
 u(t,x)-u(t,y)=	\left( u^{q-1}(t,x)-u^{q-1}(t,y) \right) \frac{1}{q-1}\xi^{2-q} ,
\]
for some $\xi$ between $u(x)$ and $u(y)$.  Then according to \eqref{ineq1} for any $y<x$ the following holds
\[
 u(t,x)-u(t,y)\le 	\frac{1}{q-1}\|u(t)\|_{L^\infty}^{2-q} \, \frac{x-y}{t}.
\]
Then using the upper bound from point \eqref{Linf} we get
\[
\frac{ u(t,x)-u(t,y)}{x-y}\le 	 C(q) M^\frac{2-q}{q}t^{-\frac{2}{q}}.
\]
Since $u$ is differentiable a.e. we can let $y\to 0$ we obtain the desired upper bounds for $u_x$.

Denoting $B_R=(-R,R)$ and using that $u\in W^{1,1}_{loc}(\rr)$ we have
\begin{align*}
\int_{B_R}|u_x(t,x)| dx  &= \int_{B_R \cap  \{u_x>0\}} u_x dx  + \int_{B_R \cap \{u_x<0\}}
(-u_x) dx \\
&=2  \int_{B_R \cap \{u_x>0\}} u_x dx  + u(-R) -u(R) \\
&\le 2R\,  C(q) M ^{\frac{2-q}{q}}  t^{-\frac{2}{q}} + 2\left(\frac{q}{q-1}M \right)^{1/q}  t^{-1/q}.
\end{align*}
Multiplying equation \eqref{Problem1} by $u$ and integrating by parts
$$\frac{1}{2}\frac{d}{dt}\int_{\R}{u^2} dx  + \int_{\R} |(-\Delta)^{\alpha/4}u|^2dx =0.$$
The decay of the $L^2(\rr)$-norm gives that
\begin{align*}
\int_{\tau}^T\int_{\R} |(-\Delta)^{\alpha/4}u|^2dx dt &\le   \frac{1}{2} \int_{\R}{u^2(\tau)} dx \leq \frac{1}{2}\left(\frac{q}{q-1} \right)^{1/q}  \tau^{-1/q} M^{\frac{q+1}{q}}.
\end{align*}
The proof is now finished.
\end{proof}

\section{Asymptotic behaviour}\label{SectAsymp}

Let $u$ be the unique mild  solution to Problem \eqref{Problem1} with nonnegative data $u_0\in L^1(\R) \cap L^\infty(\R)$ obtained in Proposition \ref{PropExistence}. In order to prove the asymptotic behaviour we perform the method  developed by Kamin and V\'{a}zquez in \cite{KamVaz88}.  For every $\lambda>0$, we define the rescaled function
\begin{equation}\label{u_lambda}
u_\lambda(t,x) := \lambda u(\lambda^{q} t, \lambda x).
\end{equation}
It follows that $\ul$ is a solution of the problem
\[
  \left\{ \begin{array}{ll}
  (\ul)_t + \lambda^{q-\alpha}(-\Delta)^{\alpha/2} [\ul] +(\ul)^{q-1}(\ul)_x=0, & x\in \rr, \, t>0, \\[10pt]
  u_\lambda(0,x)=\lambda u_0(\lambda x), &x\in \rr.
    \end{array}
    \right.
    \tag{$P_\lambda$}\label{Plambda}
\]
Using the properties obtained in Lemma \ref{est.for.u} and the definition of $u_\lambda$ we obtain the following uniform in $\lambda$ estimates for $u_\lambda$.

\begin{lemma}\label{LemmaEstul} Let $\ul$ be the rescaled function defined by \eqref{u_lambda}. Then the corresponding a-priori estimates are true.
\begin{enumerate}
\item Mass conservation: $\int_{\R}u_\lambda(t,x)dx= M, \quad \forall t\ge 0, \, \forall \lambda>0.$
  \item Decay of the $L^p$-norm:  \\
 $\displaystyle \|u_{\lambda}(t,\cdot)\|_{L^p(\R)}\le \left(\frac{q}{q-1}\right)^\frac{p-1}{pq}M^{\frac{p-1}{pq}+\frac{1}{p}}\, t^{-\frac{1}{q}\left(1-\frac{1}{p}\right)}$, $\forall \lambda>0,$\, $\forall p \ge 1$.
\item\label{W11loc} $W^{1,1}_{loc}(\R)$ estimate: for $R>0$ we have\\
$ \displaystyle \int_{B_R} | (\partial_x\ul)(t,x)|dx \le   2R\,  C(q) M ^{\frac{2-q}{q}}  t^{-\frac{2}{q}} + 2\left(\frac{q}{q-1}M \right)^{1/q}  t^{-\frac{1}{q}},\quad \forall\ t>0.$
   \item Energy estimate: for every $0<\tau<T$ and $\lambda>0$
 \begin{equation*}
  \displaystyle  \lambda^{q-\alpha}\int_\tau^T \int_\R |(-\Delta)^{\alpha/4} \ul(t,x)|^2 dx dt \le \frac{1}{2}\int_{\R}\ul^2(\tau,x)dx \le \frac{1}{2}\left(\frac{q}{q-1} \right)^{1/q}  \tau^{-1/q} M^{\frac{q+1}{q}}.
  \end{equation*}
\end{enumerate}
\end{lemma}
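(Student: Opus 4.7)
The proof is a direct change-of-variables exercise: each estimate for $u_\lambda$ is obtained by substituting $y=\lambda x$, $s=\lambda^q t$ in the corresponding estimate of Lemma \ref{est.for.u}. The cancellations produce uniform-in-$\lambda$ bounds precisely because the scaling exponents on the right-hand sides of items (1)--(7) of Lemma \ref{est.for.u} match the scaling $u_\lambda(t,x)=\lambda u(\lambda^q t,\lambda x)$. The plan is to take each item one by one and make these cancellations explicit.

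For item (1), I would write
\[
\int_\R u_\lambda(t,x)\,dx = \int_\R \lambda u(\lambda^q t,\lambda x)\,dx = \int_\R u(\lambda^q t,y)\,dy = M
\]
by mass conservation for $u$. For item (2), the same substitution gives $\|u_\lambda(t)\|_{L^p(\R)}^p = \lambda^{p-1}\|u(\lambda^q t)\|_{L^p(\R)}^p$, and plugging the $L^p$-decay of $u$ at time $\lambda^q t$ produces the factor $\lambda^{-(p-1)/p}$, which exactly cancels $\lambda^{(p-1)/p}$, leaving the stated bound in $t$ alone.

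For item (3), note that $(\partial_x u_\lambda)(t,x) = \lambda^2 u_x(\lambda^q t,\lambda x)$, so
\[
\int_{B_R}|(\partial_x u_\lambda)(t,x)|\,dx = \lambda\int_{B_{\lambda R}}|u_x(\lambda^q t,y)|\,dy,
\]
and applying the $W^{1,1}_{\text{loc}}$ estimate in Lemma \ref{est.for.u} item (6) to the right-hand side with radius $\lambda R$ and time $\lambda^q t$ produces the two terms $2\lambda R \cdot C(q) M^{(2-q)/q}(\lambda^q t)^{-2/q}$ and $2(qM/(q-1))^{1/q}(\lambda^q t)^{-1/q}$; the $\lambda$-powers cancel in both, leaving the claimed uniform bound.

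For item (4), the identity $((-\Delta)^{\alpha/4}u_\lambda)(t,x) = \lambda^{1+\alpha/2}((-\Delta)^{\alpha/4}u)(\lambda^q t,\lambda x)$ yields
\[
\lambda^{q-\alpha}\int_\tau^T\!\!\int_\R |(-\Delta)^{\alpha/4}u_\lambda|^2\,dx\,dt = \lambda \int_{\lambda^q\tau}^{\lambda^q T}\!\!\int_\R |(-\Delta)^{\alpha/4}u|^2\,dy\,ds,
\]
after substituting $y=\lambda x$ and $s=\lambda^q t$ and using $2+\alpha-q + (q-\alpha) - q = 1$. Applying Lemma \ref{est.for.u} item (7) on the interval $(\lambda^q\tau,\lambda^q T)$ bounds this by $\tfrac{\lambda}{2}\|u(\lambda^q\tau)\|_{L^2(\R)}^2 = \tfrac12\|u_\lambda(\tau)\|_{L^2(\R)}^2$ (using the $p=2$ case of the scaling relation from item (2)). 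The final inequality then follows from item (2) applied at $\tau$.

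The plan has no real obstacle; the only point that requires a moment of care is tracking the exponent of $\lambda$ in the energy estimate to ensure that the factor $\lambda^{q-\alpha}$ in front is exactly what is needed to cancel the scaling of both the time interval and the $L^2$-norm. Once this bookkeeping is verified, all four items follow from the $\lambda$-independent bounds for $u$.
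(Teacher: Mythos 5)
Your proposal is correct and is exactly the argument the paper intends: the paper states Lemma \ref{LemmaEstul} as a direct consequence of Lemma \ref{est.for.u} via the scaling $u_\lambda(t,x)=\lambda u(\lambda^{q}t,\lambda x)$, and your item-by-item change of variables supplies precisely the omitted cancellations. One small slip in the energy-estimate bookkeeping: the exponent count should be $(q-\alpha)+(2+\alpha)-1-q=1$ (the sum you wrote, $2+\alpha-q+(q-\alpha)-q$, equals $2-q$), though the displayed identity you derive from it is the correct one.
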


In what follows we establish the results stated in Theorem \ref{ThmAsympBehav} by re-writing in an equivalent manner the asymptotic behavior   \eqref{limit.asymp}.
For $1\leq p<\infty$ and $t>0$ we will prove that
\begin{equation}
\label{equiv.limit}
  \| u_\lambda(t,x) - U_M(t,x) \|_{L^p(\R)}\to 0 \quad \text{as}\quad \lambda \to \infty,
\end{equation}
 where $U_M(t,x)$ is the solution to the purely convective equation \eqref{limit.problem}.

We emphasize that  it is enough to prove \eqref{equiv.limit} only for some $t=t_0>0$.

\begin{proof}[Proof of Theorem \ref{ThmAsympBehav}]
For the reader's convenience we divide the proof according to the four-step method developed in \cite{KamVaz88}. Moreover for completeness we recall the following classical compactness argument due to Aubin-Lions-Simon.
\begin{theorem}[\cite{Simon}, Th.5] \label{3spaces}
Let us consider three Banach spaces  $X\hookrightarrow B \hookrightarrow Y$ where $X\hookrightarrow B$ is compact.
Assume $1\leq p\leq \infty$ and \\
i) $\mathcal{F}$ is bounded in $L^p((0,T),X)$,\\
ii) $\|\tau _h f-f\|_{L^p((0,T-h),Y)}\rightarrow 0$ as $h\rightarrow 0$ uniformly for $f\in\mathcal{F}$.

Then $\mathcal{F} $ is relatively compact in $L^p((0,T),B)$ (and in $C([0,T],B)$ if $p=\infty$).
\end{theorem}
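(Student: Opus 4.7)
The plan is to prove the theorem following J.\ Simon's classical argument, which reduces the problem to two ingredients: an Ehrling--Lions type interpolation inequality derived from the compact embedding $X \hookrightarrow B$, and a Kolmogorov--Riesz criterion characterizing relative compactness in $L^p((0,T);B)$.

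First I would establish the interpolation lemma: for every $\eta > 0$ there exists $C_\eta > 0$ such that $\|u\|_B \leq \eta \|u\|_X + C_\eta \|u\|_Y$ for all $u \in X$. This is proved by contradiction: if it failed for some $\eta_0 > 0$, one could extract $(u_n) \subset X$ with $\|u_n\|_B = 1$, $\|u_n\|_X \leq 1/\eta_0$, and $\|u_n\|_Y \to 0$. By compactness of $X \hookrightarrow B$, a subsequence converges in $B$, hence in $Y$ (via $B \hookrightarrow Y$), to some $u$ with $\|u\|_B = 1$ and $\|u\|_Y = 0$, contradicting the continuous injection $B \hookrightarrow Y$.

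Next, for $1 \leq p < \infty$, I would invoke a vector-valued Kolmogorov--Riesz--Fr\'echet criterion: a bounded family $\mathcal{F} \subset L^p((0,T);B)$ is relatively compact if and only if (a) $\|\tau_h f - f\|_{L^p((0,T-h);B)} \to 0$ uniformly in $f \in \mathcal{F}$, and (b) for each $0 < t_1 < t_2 < T$, the set $\{\int_{t_1}^{t_2} f(t)\,dt : f \in \mathcal{F}\}$ is relatively compact in $B$. Condition (b) is immediate from hypothesis (i): by H\"older, $\|\int_{t_1}^{t_2} f\|_X \leq (t_2-t_1)^{1/p'} \|f\|_{L^p((0,T);X)}$ is uniformly bounded, and the compactness of $X \hookrightarrow B$ then supplies relative compactness in $B$. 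For (a), the Ehrling lemma combined with hypotheses (i)--(ii) gives
\[
\|\tau_h f - f\|_{L^p((0,T-h);B)} \leq \eta \|\tau_h f - f\|_{L^p((0,T-h);X)} + C_\eta \|\tau_h f - f\|_{L^p((0,T-h);Y)},
\]
where the first term is bounded by $2\eta \sup_{f\in\mathcal{F}}\|f\|_{L^p((0,T);X)}$ and the second tends to $0$ uniformly by (ii); choosing $\eta$ small and then $h$ small yields (a).

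For $p = \infty$ the conclusion is upgraded to relative compactness in $C([0,T];B)$, which I would obtain via Arzel\`a--Ascoli. Equicontinuity in $B$ follows by applying the Ehrling inequality pointwise, using that hypothesis (ii) in the $L^\infty$ formulation gives $\sup_{t \in [0,T-h]}\|f(t+h)-f(t)\|_Y \to 0$ uniformly in $\mathcal{F}$, while $\|f(t+h)-f(t)\|_X$ is uniformly bounded. Pointwise relative compactness is obtained by approximating $f(t)$ by the averages $A_h f(t) := \frac{1}{h}\int_t^{t+h} f(s)\,ds$, which lie in a relatively compact subset of $B$ (by the boundedness in $X$ and compact embedding, as in (b)), with approximation error controlled uniformly by the equicontinuity. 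Combining both yields a relatively compact family in $C([0,T];B)$.

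The main obstacle will be rigorously establishing the vector-valued Kolmogorov--Riesz criterion, since the standard statement lives in $L^p(\mathbb{R}^N)$ and must be extended to the Banach-space-valued setting. A clean route is via mollification: for a smooth compactly supported $\rho_\varepsilon$ on $(0,T)$, write $f \approx \rho_\varepsilon * f$, where the convolutions take values in a relatively compact subset of $B$ thanks to condition (b), and the uniform approximation error is controlled by condition (a). A diagonal argument on an $\varepsilon$-net in $B$ then extracts a convergent subsequence in $L^p((0,T);B)$, completing the proof.
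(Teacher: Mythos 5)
This statement is not proved in the paper at all: it is quoted verbatim as Theorem 5 of Simon's article \cite{Simon} and used as a black box in Step I of the proof of Theorem \ref{ThmAsympBehav}, so there is no in-paper proof to compare against. Your argument is, in substance, Simon's original one: the Ehrling--Lions interpolation inequality $\|u\|_B\leq \eta\|u\|_X+C_\eta\|u\|_Y$ obtained by contradiction from the compact embedding, combined with the vector-valued translation (Kolmogorov--Riesz--Fr\'echet) criterion in $L^p((0,T);B)$, with condition (b) supplied by H\"older and the compactness of $X\hookrightarrow B$ and condition (a) by interpolating between hypotheses (i) and (ii). All of these steps are correct. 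The only place where I would ask for a little more care is the case $p=\infty$: elements of $\mathcal{F}$ are a priori only measurable essentially bounded functions, so before invoking Arzel\`a--Ascoli you should note that the uniform estimate $\|\tau_h f-f\|_{L^\infty((0,T-h);B)}\to 0$ (obtained from Ehrling plus (i)--(ii)) shows each $f$ coincides a.e.\ with a uniformly continuous $B$-valued function, and that it is this family of continuous representatives which is equicontinuous and pointwise relatively compact. With that remark added, your proof is complete and faithful to the cited source.
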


Let us  consider $0<t_1<t_2<\infty$.
\medskip

\noindent\textbf{Step I. Compactness of family $(u_\lambda)_{\lambda>0}$ in $C([t_1,t_2],L^2_{loc}(\rr))$}.
Let $B_R =(-R,R)$. We apply the Aubin-Lions-Simon compactness argument in Theorem \ref{3spaces} to the triple $W^{1,1}(B_R) \hookrightarrow L^2(B_R)\hookrightarrow H^{-1}(B_R)$.
Estimate
 \ref{W11loc} in Lemma \ref{LemmaEstul} and the mass conservation give us that  $(u_\lambda)_{\lambda>0} $ is
 uniformly bounded in $L^{\infty}((t_1,t_2):W^{1,1}(B_R))$.  Moreover, we can prove that  $(\partial _t u_{\lambda})_{\lambda>1}$ is uniformly bounded in $L^2 ((t_1,t_2):H^{-1}(B_R))$.
Indeed, let us choose $\varphi \in C_c((0,\infty)\times B_R)$. We extend it with zero outside $B_R$.
For such $\varphi$ and $\lambda>1$ we have
\begin{align*} & \left| \int_{t_1}^{t_2}\int_{\R} (\partial_t u_{\lambda}) \varphi   \right|  \le \left|\int_{t_1}^{t_2}\int_{\R}(u_{\lambda}^q)_x  \varphi   \right| +  \lambda^{q-\alpha}  \left| \int_{t_1}^{t_2}\int_{\R} \La[\ul]    \varphi  \right|  \\
&= \left|\int_{t_1}^{t_2}\int_{\R}u_{\lambda}^q  \varphi_x  \right| +  \lambda^{q-\alpha} \left| \int_{t_1}^{t_2} \int_{\R}(-\Delta)^{\alpha/4}[\ul]   (-\Delta)^{\alpha/4}  \varphi  \right|  \\
&\le \|\ul^q \|_{L^2((t_1,t_2),L^2(\R))} \cdot \|\varphi \|_{L^2((t_1,t_2),H^1(\R))} + \\
& \quad+ \lambda^{q-\alpha} \left(\int_{t_1}^{t_2} \int_{\R}|(-\Delta)^{\alpha/4}\ul |^2 \right)^{1/2} \left(\int_{t_1}^{t_2} \int_{\R}|(-\Delta)^{\alpha/4}\varphi  |^2 \right)^{1/2} \\
&= \|\ul^q \|_{L^2((t_1,t_2),L^2(\R))} \cdot \|\varphi \|_{L^2((t_1,t_2),H^1(\R))} + \\
& \quad + \lambda^{\frac{q-\alpha}{2}} \left(\lambda^{q-\alpha} \int_{t_1}^{t_2} \int_{\R}|(-\Delta)^{\alpha/4}\ul|^2 \right)^{1/2} \left(\int_{t_1}^{t_2} \int_{\R}|(-\Delta)^{\alpha/4}\varphi |^2 dx dt\right)^{1/2} \\
&\le \|\ul^q \|_{L^2((t_1,t_2),L^2(\R))} \cdot \|\varphi \|_{L^2((t_1,t_2),H^1(\R))}  + \lambda^{\frac{q-\alpha}{2}} C(M,q,t_1)  \|\varphi \|_{L^2((t_1,t_2),H^{\alpha/2}(\R))}    \\
&\le C(M,q,t_1) \|\varphi \|_{L^2((t_1,t_2),H^1(\R))}.
\end{align*}
This gives us that
$$\|(u_{\lambda})_t  \|_{L^2 ((t_1,t_2),H^{-1}(B_R))}   \le C(M,q,t_1), \quad \forall \lambda \ge 1.$$
Using the  classical compactness arguments in Theorem \ref{3spaces}, we deduce that $(\ul)_{\lambda>1}$ is relatively compact in $C([t_1,t_2],L^2(B_R))$.  Therefore there exists $U \in C([t_1,t_2],L^2(B_R))$  such that $\ul \to U$ in $ C([t_1,t_2],L^2(B_R)).$
By a diagonal argument we get that  $U \in  C([t_1,t_2], L^2_{loc}(\R))$   and
\begin{equation}\label{conv:ul:Uloc}
\ul \to U \quad \text{in} \quad C([t_1,t_2],L^2_{loc}(\R)) \quad \text{as}\quad  \lambda \to \infty.
\end{equation}

\medskip

\noindent \textbf{Step II. Tail control and convergence in $C([t_1,t_2],L^1(\rr))$.} In view of \eqref{conv:ul:Uloc} we obtain that $\ul \to U$ {in} $C([t_1,t_2]:L^1_{loc}(\R)) $. In order to prove the convergence in $C([t_1,t_2],L^1(\rr))$
we will prove a uniform tail control of the functions $(u_\lambda)_{\lambda>1  }$. More exactly, we prove that there exists a constant $C(M)$ such that
\begin{equation}\label{tail}
\int_{|x|>2 R} \ul(t,x) dx \leq \int_{|x|>R} u_0(x) dx + C( M)\left(
 \frac{t\lambda^{q-\alpha}}{R^\alpha}+ \frac{t^{1/q}}{R}\right),\quad \forall t>0.
\end{equation}
In view of this estimate, classical arguments give us that
\begin{equation*}
\ul \to U \quad \text{in} \quad C([t_1,t_2],L^1(\R)) \quad \text{as}\quad  \lambda \to \infty.
\end{equation*}

Let us now prove estimate \eqref{tail}. Let $\varphi\in C^2(\rr)$ be such that $0\le \varphi\le 1$,  $\varphi\equiv 1$ for $|x|\ge 2$, $\varphi_R \equiv 0$ for $|x|\le 1$. Let $\varphi(x)=\varphi(x/R)$.
Multiplying equation \eqref{Plambda} by $\varphi$ and integrating by parts we obtain
\begin{align*}
\int_{\R}\ul (t) \varphi_R dx
&=\int_{\R}\ul(0)  \varphi_R dx -\lambda^{q-\alpha} \int_0^t\int_{\R} \ul (\tau,x) (-\Delta)^{\alpha/2}  \varphi_R dx d\tau  \\
&\quad +\int_0^t\int_{\R} \ul ^q(\tau,x) (\varphi_R)_x dx d\tau\\
&=I+II+III.
\end{align*}
For $\lambda >1$ the first term satisfies
$$
I \le \int_{|x|\ge R}\ul(0,x)  dx =\int_{|x|>\lambda R} u_0(x) dx \le \int_{|x|> R} u_0(x) dx  .
$$
Using that
 $\varphi\in C^2_b(\rr)$ and the homogeneity of $(-\Delta)^{\alpha/2} $ we obtain that
$$ |((-\Delta)^{\alpha/2}  \varphi_R) (x)| = \frac{1}{R^{\alpha}}|( (-\Delta)^{\alpha/2}  \varphi) (x/R) | \le  \frac{C}{R^{\alpha}}.$$
Thus the second term satisfies
\begin{align*}
II&\le \lambda^{q-\alpha}  \| (-\Delta)^{\alpha/2}  \varphi_R (x) \|_{L^\infty(\R)}\int_0^t\int_{\R} \ul (\tau,x) dx d\tau \le \lambda^{q-\alpha}  \frac{C}{R^{\alpha}} t M.
\end{align*}
The third term is bounded as follows:
\begin{align*}
III&\le \|(\varphi_R)' \|_{L^\infty(\R)} \int_0^t \| \ul (\tau)\|_{L^q(\R)}^q d\tau\le C(M)\frac{t^{1/q}}{R}.
\end{align*}
Using the fact that $\varphi_R$ is identically one outside the ball of radius $2R$ we obtain the desired estimate  \eqref{tail}.

\medskip

\noindent \textbf{Step III. Identifying the limit.} We now prove that  $U\in C ((0,\infty), L^1(\rr))$ obtained above is an entropy solution of system \eqref{limit.problem}.
 First, by construction in \cite{AlibaudEntropy,Droniou}, $u$ is an entropy solution  of Problem \eqref{FHE} and this implies that $\ul$ is an entropy solution of Problem \eqref{Plambda}.
In view of Definition \ref{DefnEntropySol} with the
 particular choice $\eta_{k}(s)=|s-k|$ and $\phi_{k}(s)=\sgn(s-k){(f(s)-f(k))}$, function $u_\lambda$ satisfies for any $\varphi\in C_{c}^{\infty}( (0,\infty)\times \R)$ the following inequality:
\begin{align*}
  \int _0^\infty\int_{\R} &(|u_{\lambda}-k|\partial_t \varphi +\sgn(u_{\lambda}-k)(f(u_{\lambda})-f(k))\partial _x\varphi)dx dt \\
\nonumber  &+c(\alpha)  \lambda^{q-\alpha} \int _0^\infty\int_{\R}\int _{|z|> r} \sgn (u_{\lambda }(t,x)-k)
\frac{ u_\lambda(t,x+z)-u_\lambda(t,x)}{|z|^{1+\alpha}} \varphi(t,x) dzdxdt+\\
\nonumber &+c(\alpha)  \lambda^{q-\alpha} \int _0^\infty\int_{\R} \int _{|z|\leq r} |u_{\lambda }(t,x)-k|
\frac{\varphi(x+z)-\varphi(x)- \varphi'(x)z}{|z|^{1+\alpha}}dzdxdt\geq 0.
\end{align*}
We prove that the last two terms, denoted by $I_{1},I_{2}$, tend to zero as $\lambda\rightarrow \infty$. Assume that $\varphi$ is supported in $(0,T)\times (-R,R)$ for some positive $T$ and $R$.
 The first term satisfies
\begin{align*}
|I_1|&\leq 2c(\alpha) \lambda^{q-\alpha} \|\varphi\|_{L^\infty(\R)} \int_0^T \int _{\R} |u_\lambda(t,x)|\int _{|z|>r} \frac1{|z|^{1+\alpha}}dz\\
&\leq C(\alpha,r,\varphi) T M \lambda^{q-\alpha}\rightarrow 0, \quad \lambda\rightarrow\infty.
\end{align*}
 In the case of the second term we have
 \begin{align*}
  |I_{2}|\leq c(\alpha)\lambda^{q-\alpha} \|\varphi''\|_{L^\infty(\R)} \int _0^T \int _{|x|\leq R+r} |u_\lambda(t,x)-k| \int_{|z|\leq r} \frac1{|z|^{\alpha-1}}dz \lesssim \lambda^{q-\alpha}\rightarrow 0, \quad \lambda\rightarrow\infty.
\end{align*}
Since $u_\lambda\rightarrow U$ in $C((0,\infty),L^1(\rr))$ and $\varphi\in C_c^\infty((0,\infty)\times \rr)$
we obtain
\[
  \int _0^\infty\int_{\R} |u_{\lambda}(t,x)-k|\partial_t \varphi dxdt\rightarrow   \int _0^\infty\int_{\R} |U(t,x)-k|\varphi dxdt.
\]
Observe that since $u_\lambda\rightarrow U$ in $C((0,\infty),L^1(\rr))$, function $U$ satisfies
\[
  \int_{\rr}U(t,x)dx=M.
\]

Moreover  $u_\lambda \rightarrow U$ a.e. in $(0,\infty)\times \rr$. This shows that the $L^\infty(\rr)$ bound in $u_\lambda$ transfers to $U$:
\[
  \| U(t)\|_{L^\infty(\rr)}\leq C(M) t^{-1/q}.
\]
This shows that $f(u_\lambda)\rightarrow f(U)$ in $C((0,\infty),L^1(\rr))$ and
\[
    \int _0^\infty\int_{\R} \sgn(u_{\lambda}-k)(f(u_{\lambda})-f(k))\partial _x\varphi dxdt\rightarrow
      \int _0^\infty\int_{\R} \sgn(U-k)(f(U)-f(k))\partial _x\varphi dxdt .
\]

In view of the fact that $I_1$ and $I_2$ tend to zero as $\lambda\rightarrow\infty$ we obtain that $U$ satisfies condition C1) in Definition \ref{entropysol}.

We now identify   the initial data taken by $U$ at $t=0$ by proving condition C2) in Definition \ref{entropysol}. Multiplying \eqref{Plambda} by $\psi\in C_c^2(\rr)$ the solution $u_\lambda$ satisfies
\[
  \int _{\rr} u_\lambda(t,x)\psi (x)dx-\int _{\rr}u_{\lambda}(0,x)\psi(x)dx=\int _0^t \int _{\rr} f(u_\lambda)\psi_x-\lambda^{q-\alpha}\int _0^t \int _{\rr} u_\lambda (-\Delta)^{\alpha/2}\psi.
\]
This implies that
\begin{align*}
\Big |\int _\rr  u_\lambda(t,x)\psi (x)dx&- \int _{\rr}u_0(x)\psi \big(\frac x\lambda\big)\Big |\leq \|\psi_x\|_{L^\infty(\rr)} \int _0^t \int_{\rr }u_\lambda^q dxds + t \lambda^{q-\alpha} M\|\psi\|_{H^\alpha(\rr)}   \\
  &\leq C(M) t^{1/q} \|\psi_x\|_{L^\infty(\rr)}+ t \lambda^{q-\alpha} M\|\psi\|_{H^\alpha(\rr)} .
\end{align*}
Passing to the limit $\lambda\rightarrow\infty$ and using that $q<\alpha$ we get that for any $\psi\in C_c^2(\rr)$ we have
\begin{equation}\label{initial.data.h2}
  \Big|\int _\rr  U(t,x)\psi (x)dx-M\psi(0)dx\Big |\leq t^{1/q}\|\psi\|_{H^2(\rr)} .
	\end{equation}
By density this estimate also holds for any $\psi\in H^2(\rr)$.

We now claim that
for any $\psi \in BC(\rr)$ the following holds
\begin{equation}
\label{initial.data}
  \lim _{t\rightarrow 0} \int _\rr  U(t,x)\psi (x)dx=M\psi(0) .
\end{equation}
This shows that $U$ is the unique entropy solution of system \eqref{limit.problem}. Since \eqref{limit.problem} has a unique solution, $U_M$, then the whole sequence $(u_\lambda)_{\lambda>0}$ converges to $U$ not only a subsequence.

We now prove  that  an approximation argument and the tail control of $u_\lambda$ (so of $U$)  give \eqref{initial.data} for any $\psi\in BC(\rr)$. Even this  procedure is standard, for completeness  we prefer to add it here. Let us choose a sequence of mollifiers $\{\rho_n\}_{n\geq 1}$ as in \cite[Ch.~4.4, p.~108]{MR2759829} and $\psi_n=\rho_n\ast \psi$. It follows that $\|\psi_n\|_{L^\infty(\rr)}\leq \|\psi\|_{L^\infty(\rr)}$ and $\psi_n\rightarrow\psi$ uniformly on compact sets of $\rr$ (cf. \cite[Prop.~4.2.1, Ch.~4, p.~108]{MR2759829}). Applying \eqref{initial.data.h2} to $\psi_n\in H^2(\rr)$ we obtain
\[
  \Big|\int _\rr  U(t,x)\psi_n (x)dx-M\psi_n(0)\Big |\leq t^{1/q}\|\psi_n\|_{H^2(\rr)} .
\]
We write
\begin{align*}
\label{}
  \int _\rr  U(t,x)\psi (x)dx-M\psi(0)=&\int _{|x|>2R}  U(t,x)(\psi(x)-\psi_n (x))dx\\
  & +\int _{|x|<2R}U(t,x)(\psi(x)-\psi_n(x))dx-M(\psi(0)-\psi_n(0))\\
 &+ \int _\rr  U(t,x)\psi_n (x)dx - M\psi_n(0)\\
 &=I+II+III.
\end{align*}
The uniform tail control in \eqref{tail} and the fact that for any $t>0$, $u_\lambda(t)\rightarrow U(t)$ in $L^1(\rr)$ give us, letting $\lambda\rightarrow\infty$, that $U$ satisfies something similar to \eqref{tail}:
\[
\int_{|x|>2 R} U(t,x) dx \leq \int_{|x|>R} u_0(x) dx + C( M)\frac{t^{1/q}}{R},\quad \forall t>0.
 \]
Hence
\begin{align*}
\label{}
 |I|= \Big| \int _{|x|>2R}  U(t,x)(\psi(x)-\psi_n (x))dx\Big|&\leq 2\|\psi\|_{L^\infty(\rr)}\int _{|x|>2R}U(t,x)dx\\
  &\leq 2\|\psi\|_{L^\infty(\rr)}\Big(\int_{|x|>R} u_0(x) dx + C( M)\frac{t^{1/q}}{R}\Big)< \epsilon,
\end{align*}
provided that $0<t<1$ and $R>R(\epsilon)$. Let us fix $R$ large enough. We analyze the second term  $II$. We have
\begin{align*}
\label{}
  \Big|\int _{|x|<2R}U(t,x)(\psi(x)-\psi_n(x))dx\Big|&\leq \|\psi-\psi_n\|_{L^\infty(|x|<2R)}\int _{\rr}U(t,x)dx\\
  &=M \|\psi-\psi_n\|_{L^\infty(|x|<2R)}.
\end{align*}
Thus, by using that $\psi_n\rightarrow \psi$ uniformly on compact sets of $\rr$ we get
\[
|II|\leq 2M \|\psi-\psi_n\|_{L^\infty(|x|<2R)}\leq \epsilon
\]
provided that $n$ is large enough. We now apply estimate \eqref{initial.data.h2} to $\psi_n$ to obtain
\[
| III |\leq t^{1/q}\|\psi_n\|_{H^2(\rr)}<\epsilon,
\]
provided $t$ is small enough. Hence $|I+II+III|\leq 3\epsilon$ for $t$ small enough, which finishes the proof of \eqref{initial.data}.

\medskip

\noindent \textbf{Step IV. Conclusion.} When $p=1$ we have proved that for any $t>0$, $u_\lambda(t)\rightarrow U_M(t)$ in $L^1(\rr)$. For  $p>1$ we use interpolation, the fact that $(u_\lambda(t))_{\lambda>0}$ is uniformly bounded in $L^{2p}(\rr)$ and that $U(t)\in L^{2p}(\rr)$. Indeed, we have
\[
  \|u_\lambda(t)-U_M(t)\|_{L^p(\rr)}\leq  \|u_\lambda(t)-U_M(t)\|_{L^1(\rr)}^{1/(2p-1)} (\|u_\lambda(t)\|_{L^{2p}(\rr)} +\|U_M(t)\|_{L^{2p}(\rr)})^{2(p-1)/(2p-1)} ,
\]
since $\frac{1}{p}=\frac{1-\theta}{1}+\frac{\theta}{2p}$ with $\theta=\frac{2(p-1)}{2p-1}.$
This proves the result for any $1\leq p<\infty$ and the proof is finished.
\end{proof}

\section{Appendix}

We give now  the proof of Lemma \ref{decay.nucleu}. We mention that these estimates were done in \cite{VazClassicalSolJEMS} for dimensions $N\ge 2$ and in the particular case $s=\alpha$ using some technical results of \cite{Pruitt}. We provide here the proof for all $s\in (0,2)$ and $\alpha\in (0,2)$ in the one-dimensional case. This requires a more careful proof since the results of \cite{Pruitt} allow only Bessel functions of positive index.

Using the homogeneity of the Fourier transform of $K_t^\alpha$ the proof is easily reduced to the case $t=1$. To simplify the presentation we will denote $K^\alpha$
the kernel $K_t^\alpha$ at the time $t=1$. In the first case we know (see \cite{BlumenthalGetoor}) that $K^\alpha$ satisfies
\[
  |K^\alpha(x)|\lesssim \frac{1}{|x|^{1+\alpha}}, \quad |x|>>1.
\]
The estimates on the $L^p(\rr)$ norm of $K^\alpha$ immediately follow.

We now want to estimate $(-\Delta)^\frac{s}{2} K^{\alpha}$. Using the Fourier transform we have
\[
(-\Delta)^\frac{s}{2} K^\alpha(x) =\frac 1{2\pi}\int_{-\infty}^{+\infty} e^{i x \xi}e^{-|\xi|^\alpha} |\xi|^s d\xi =\frac 1{\pi}\int_{0}^{+\infty} \cos(x \xi) e^{-|\xi|^\alpha} \xi^s  d\xi .
\]
and
\[
  (-\Delta)^\frac{s}{2} \partial_x K^\alpha(x) =\frac 1{2\pi}\int_{-\infty}^{+\infty} e^{i x \xi}e^{-|\xi|^\alpha} |\xi|^s (i\xi)d\xi =-\frac 1{\pi}\int_{0}^{+\infty} \sin(x \xi) e^{-|\xi|^\alpha} \xi^{s+1}  d\xi.
   \]

We consider the case when $x$ is positive and then
\[
  (-\Delta)^\frac{s}{2} K^\alpha(x) =\sqrt{\frac x{2\pi}} \int_{0}^{+\infty}  e^{-|\xi|^\alpha}   \xi^{s+{1/2}}J_{-1/2} (x\xi)d\xi
\]
and
\[
  (-\Delta)^\frac{s}{2} \partial_x K^\alpha(x) =-\sqrt{\frac x{2\pi}} \int_{0}^{+\infty}  e^{-|\xi|^\alpha}   \xi^{s+{3/2}}J_{1/2} (x\xi)d\xi,
\]
where $J_n$ is the Bessel function of first kind with index $n$.
We now use Lemma 1 in \cite{Pruitt} but we need to involve Bessel functions with positive index $J_\nu$, $\nu \geq 0$. In the second case applying this lemma we obtain that for $|x|$ large the following holds
\[
 | (-\Delta)^\frac{s}{2} \partial_x K^\alpha(x)|\lesssim \frac 1{|x|^{s+2}}.
\]
This shows that $(-\Delta)^\frac{s}{2} \partial_x K^\alpha$ belongs to $L^p(\rr)$ for any $1\leq p\leq \infty$.

In the first case we perform an integration by parts to obtain that
\[
    (-\Delta)^\frac{s}{2} K^\alpha(x)=-\frac{1}{2\pi x} \int _0^\infty e^{-|\xi|^\alpha}J_{1/2}(x\xi) \left( s \xi^{s-1/2} - \sigma \xi^{s+\alpha-1/2} \right) d\xi.
\]
Applying again Lemma 1 in \cite{Pruitt} we obtain that for $|x|$ large
\[
   | (-\Delta)^\frac{s}{2} K^\alpha(x)|\lesssim \frac 1{|x|^{s+1}}
\]
and then $(-\Delta)^\frac{s}{2} K^\alpha$ belongs to $L^p(\rr)$ for any $1\leq p\leq \infty$.

\bibliographystyle{siam}
\bibliography{biblioLIDS}

\begin{acknowledgements}\label{ackref}
This work has begun when D. Stan was a postdoctoral researcher at the Research Institute of the University of Bucharest (ICUB) during September-December 2015. She would like to acknowledge the warm hospitality of the Institute of Mathematics Simion Stoilow of the Romanian Academy during September-November 2015.

 L. Ignat acknowledges the Basque Center for Applied Mathematics for his visit in October 2016/November 2017.

 The authors are grateful to Professors Miguel Escobedo and Jerome Droniou for useful discussions. They also thank the anonymous referee for many useful comments which lead to a better comprehension of the paper.
\end{acknowledgements}


\affiliationone{Liviu Ignat\\
Institute of Mathematics Simion Stoilow \\
of the Romanian Academy, \\
Centre Francophone en Math\'ematique\\
Bucharest, Romania 
\email{liviu.ignat@imar.ro}}
\affiliationtwo{Diana Stan\\
Basque Center for Applied Mathematics,\\
Bilbao,  Spain. 
\email{dstan@bcamath.org}}

\end{document}